\newcommand{\ds}{\displaystyle}
\newtheorem{de}{Definition}[section]
\newtheorem{theor}{Theorem}[section]
\newtheorem{pr}{Proposition}[section]
\begin{document}
\begin{center}
{\Large \textbf{On low-dimensional filiform Leibniz
algebras\\ and their invariants}}\\

{\sc $^1$I. S. Rakhimov and $^2$Munther A. Hassan}

$^{1,2}$Institute for Mathematical Research (INSPEM)

$^1$Department of Mathematics, FS,\\UPM, 43400,
Serdang, Selangor Darul Ehsan, Malaysia,\\[0pt]

$^{1}$risamiddin@gmail.com \ \
$^{2}$munther\_abd@yahoo.com
\end{center}

\author{ I.S. Rakhimov$^{1}$,Munther A. Hassan $^{2},$ $}
\title{\mathbf{On low-dimensional filiform Leibniz
algebras \\ and their invariants}}

\begin{abstract} The paper deals with the complete classification of a subclass of complex filiform
Leibniz algebras in dimensions 5 and 6. This subclass arises from
the naturally graded filiform Lie algebras. We give a complete
list of algebras. In parametric families cases, the corresponding
orbit functions (invariants) are given. In discrete orbits case,
we show a representative of the orbits.
\end{abstract}

\medskip 2000 Mathematics Subject Classification: Primary 17A32, 17A60, 17B30; Secondary 13A50

Key words: filiform Leibniz algebra, classification, invariant.

\thispagestyle{empty}

\section{Introduction}

\qquad  Leibniz algebras were introduced by J. -L. Loday \cite{L}.
A skew-symmetric Leibniz algebra is a Lie algebra. The main
motivation of Loday to introduce this class of algebras was the
search of an ``obstruction'' to the periodicity in algebraic
$K-$theory. Besides this purely algebraic motivation, some
relationships with classical geometry, non-commutative geometry,
and physics have been recently discovered. The present paper deals
with the low-dimensional case of a subclass of filiform Leibniz
algebras.

The outline of the paper is as follows. Section 1 is an
introduction to the subclass of Leibniz algebras that we are going
to investigate. The main results of the paper consisting of 
complete classification of a subclass of low dimensional filiform
Leibniz algebras are in Section 2. Here, for 5- and 6-dimensional
cases, we give lists of isomorphism classes. For parametric family
cases, the corresponding invariant functions are presented. Since
the proofs in 6-dimensional case are similar to those in
5-dimensional case, the detailed proofs are given for dimension 5
only.

\begin{de} An algebra L over a field K is called a Leibniz algebra, if its
bilinear operation $[\cdot,\cdot]$ satisfies the following Leibniz
identity:
 $ \ds [x,[y,z]]=[[x,y],z]-[[x,z],y],$\ \ \ \ \ \ \ $ x,y,z\in L. $
\end{de}

Onward, all algebras are assumed to be over the fields of complex
numbers $\mathbb{C}.$

Let $L$ be a Leibniz algebra. We put:
\begin{equation*}
L^{1}=L,\ \ L^{k+1}=[L^{k},L],\ k\geq 1.
\end{equation*}
\begin{de}
A Leibniz algebra L is said to be nilpotent, if there exists $s\in
\mathbb{N},$ such that
\begin{equation*}L^{1}\supset  L^{2}\supset ...\supset L^{s}=\{0\}.\end{equation*}
\end{de}
\begin{de}
A Leibniz algebra $L$ is said to be filiform, if $ \ds dimL^{i}=n-i,$ where $%
n=dimL$ and $2\leq i\leq n.$
\end{de}

The set of $n-$dimensional filiform
Leibniz algebras we denote by $Leib_n.$

  The following theorem from \cite{GO} splits $Leib_{n+1}$  into three disjoint
 subset.
\begin{theor}
Any $(n+1)-$dimensional complex filiform Leibniz algebra admits a
basis $\{e_0,e_1,...,e_n\}$ called adapted, such that the table of
multiplication of the algebra has one of the following forms,
where non defined products are zero:
\end{theor}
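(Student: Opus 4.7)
The plan is to exploit the rigidity that the filiform condition forces on the descending central series. Since $\dim L^i=(n+1)-i$ for $2\le i\le n+1$, one has $\dim L/L^2=2$ while $\dim L^i/L^{i+1}=1$ for every $i\ge 2$. So $L$ is generated by two elements (any two representatives of a basis of $L/L^2$) and the associated graded algebra has an essentially one-dimensional ``tail''. This is precisely the structural data needed to manufacture the adapted basis.

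First I would choose a ``good'' first generator. The characteristic sequence of a filiform Leibniz algebra is $(n,1)$, so there exists $e_0\in L\setminus L^2$ for which the right multiplication $R_{e_0}:x\mapsto [x,e_0]$ has a single Jordan block of size $n$ on a complement of $\ker R_{e_0}$. I would pick such an $e_0$, then choose $e_1\in L\setminus L^2$ linearly independent of $e_0$ modulo $L^2$, and propagate the basis by the recursive rule $e_{i+1}=[e_i,e_0]$ for $i\ge 1$. The maximality of the Jordan block together with $\dim L^i/L^{i+1}=1$ guarantees that $\{e_0,e_1,\dots,e_n\}$ is a basis and that $e_i\in L^i$ for $i\ge 1$, so the products $[e_i,e_0]$ are already in adapted form.

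The next step is to reduce the remaining structure constants. The unknown products are $[e_0,e_0]$, $[e_0,e_j]$ and $[e_1,e_j]$, all of which lie in $L^2$, so they expand as $\sum_{k\ge 2}c_k e_k$ with unknown scalars. I would plug triples of basis vectors into the Leibniz identity $[x,[y,z]]=[[x,y],z]-[[x,z],y]$, most usefully with $z=e_0$ (which shifts $e_j$ to $e_{j+1}$), to obtain recursions on the scalars $c_k$. These recursions are linear and propagate the freedom from the lowest-weight terms $[e_0,e_0]$, $[e_0,e_1]$, $[e_1,e_1]$ to all higher products. The trichotomy then arises according to which of these leading terms is non-zero: roughly, $[e_1,e_1]\ne 0$ gives one family, $[e_1,e_1]=0$ but $[e_0,e_0]\ne 0$ gives a second, and the ``Lie-like'' case (where the structure descends from a naturally graded filiform Lie algebra) gives the third.

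The main obstacle is not the Leibniz propagation, which is essentially mechanical, but the two normalization steps that bracket it. On the input side, producing an $e_0$ whose $R_{e_0}$ achieves the maximal Jordan type requires a genericity/openness argument on $L\setminus L^2$ together with an analysis of the characteristic sequence; the exceptional behaviour in certain parities of $n$ (where a distinct naturally graded filiform Lie algebra exists) is exactly what forces a third family rather than two. On the output side, one must verify that only transformations rescaling $e_0,e_1$ and triangularly adjusting $e_2,\dots,e_n$ preserve the adapted form, and then use the residual freedom to eliminate as many of the $c_k$ as possible without collapsing the three families into one another. This is where I expect the bookkeeping to be the most delicate.
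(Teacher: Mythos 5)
The paper does not prove this statement at all; it is quoted from G\'omez--Omirov \cite{GO}, so your sketch can only be measured against the known argument, not against anything in the text. Your general strategy (two-generation from $\dim L/L^{2}=2$, an adapted basis propagated by right multiplication by $e_0$, and the Leibniz identity used with $z=e_0$ to push constraints up the filtration) is the right skeleton, but two of your concrete steps fail. First, the recursion $e_{i+1}=[e_i,e_0]$ started from $e_1$ does not produce a basis uniformly over the class: in the second family $SLeib_{n+1}$ one has $[e_1,e_0]=0$, and the descending series is generated by $e_0$ itself via $e_2=[e_0,e_0]$, $e_{i+1}=[e_i,e_0]$. Equivalently, the generator of the size-$n$ Jordan block of $R_{e_0}$ need not be a vector independent of $e_0$ modulo $L^2$; it can be $e_0$ itself. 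Your construction therefore has to branch already at the basis-building stage, and that branching is precisely where two of the three families separate.

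Second, the trichotomy criterion you propose is wrong as stated. The split is not governed by whether $[e_1,e_1]$ or $[e_0,e_0]$ vanish in $L$: the algebra $L(0,0,1,0)\in TLeib_5$ appearing in this very paper has $[e_1,e_1]=e_4\neq 0$ yet lies in the third family, and $L(1,0,0,0)\in TLeib_5$ has $[e_0,e_0]=e_4\neq 0$ and $[e_1,e_1]=0$ yet is not in the second family. What actually governs the split is the isomorphism type of the associated naturally graded algebra $\mathrm{gr}\,L=\bigoplus_i L^i/L^{i+1}$, i.e.\ the leading degree-two components of $[e_0,e_0]$, $[e_1,e_0]$ and $[e_0,e_1]$, not the products themselves. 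The essential external input is the classification of naturally graded filiform Leibniz algebras (two non-Lie algebras plus the naturally graded filiform Lie algebras); every filiform Leibniz algebra is then one of these perturbed by higher-order terms, and the three tables $FLeib_{n+1}$, $SLeib_{n+1}$, $TLeib_{n+1}$ correspond to the three possible graded types. Without that input, the ``mechanical'' Leibniz propagation you describe does not by itself organize the answer into exactly three families.
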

$\bigskip FLeib_{n+1}=\left\{
\begin{array}{ll}
\lbrack e_{0},e_{0}]=e_{2}, &  \\[1mm]
\lbrack e_{i},e_{0}]=e_{i+1}, & \ \ \ \ \ \ 1\leq i\leq {n-1}, \\[1mm]
\lbrack e_{0},e_{1}]=\alpha _{3}e_{3}+\alpha _{4}e_{4}+...+\alpha
_{n-1}e_{n-1}+\theta e_{n}, &  \\[1mm]
\lbrack e_{j},e_{1}]=\alpha _{3}e_{j+2}+\alpha
_{4}e_{j+3}+...+\alpha
_{n+1-j}e_{n}, & \ \ \ \ \ \ 1\leq j\leq {n-2}, \\
\qquad \alpha _{3},\alpha _{4},...,\alpha _{n},\theta \in
\mathbb{C}. &
\end{array}%
\right. $ \\[1mm]

$ SLeib_{n+1}=\left\{
\begin{array}{ll}
\lbrack e_{0},e_{0}]=e_{2}, &  \\[1mm]
\lbrack e_{i},e_{0}]=e_{i+1}, & \  \qquad \ \ \ 2\leq i\leq {n-1}, \\[1mm]
\lbrack e_{0},e_{1}]=\beta _{3}e_{3}+\beta _{4}e_{4}+...+\beta
_{n}e_{n}, &
\\[1mm]
\lbrack e_{1},e_{1}]=\gamma e_{n}, &  \\[1mm]
\lbrack e_{j},e_{1}]=\beta _{3}e_{j+2}+\beta _{4}e_{j+3}+...+\beta
_{n+1-j}e_{n}, & \  \qquad \quad 2\leq j\leq {n-2}, \\
\qquad \beta _{3},\beta _{4},...,\beta _{n},\gamma \in \mathbb{C}.
&
\end{array}%
\right. $

\bigskip

$ TLeib_{n+1}=\left\{
\begin{array}{lll}
\lbrack e_{i},e_{0}]=e_{i+1}, \qquad  \qquad  \qquad \qquad  \qquad  \qquad \qquad  \qquad  \quad 1\leq i\leq {n-1},  \\[1mm]
\lbrack e_{0},e_{i}]=-e_{i+1}, \qquad \qquad  \qquad  \qquad \qquad  \qquad  \qquad \qquad  \   2\leq i\leq {n-1}, \\[1mm]
\lbrack e_{0},e_{0}]=b_{0,0}e_{n},  \\[1mm]
\lbrack e_{0},e_{1}]=-e_{2}+b_{0,1}e_{n}, \\[1mm]
\lbrack e_{1},e_{1}]=b_{1,1}e_{n},  \\[1mm]
\lbrack e_{i},e_{j}]=a_{i,j}^{1}e_{i+j+1}+\dots
+a_{i,j}^{n-(i+j+1)}e_{n-1}+b_{i,j}e_{n}, \quad   1\leq i<j\leq {n-1},   \\[1mm]
\lbrack e_{i},e_{j}]=-[e_{j},e_{i}], \qquad  \qquad  \qquad \qquad  \qquad  \qquad \qquad  \quad \ \  1\leq i<j\leq n-1,  \\[1mm]
\lbrack e_{i},e_{n-i}]=-[e_{n-i},e_{i}]=(-1)^{i}b_{i,n-i} e_{n}, \\[1mm]
\emph{\rm {where} $a_{i,j}^k, b_{i,j} \in \mathbb{C}$ and
$b_{i,n-i}=b$ whenever $1\leq i\leq {n-1},$ and $b=0$ for even
$n.$}
\end{array}%
\right. $

According to this theorem, the isomorphism problem inside of each
class can be studied separately. The classes $FLeib_{n},SLeib_{n}$
in low dimensional cases have been considered in \cite{RS1},
\cite{RS2}. The general methods of classification for $Leib_{n}$
has been given in \cite{BR1}, \cite{BR2} and \cite{OR}
(\footnote{Combined version of \cite{BR1}, \cite{BR2} and
\cite{OR} will appear in \emph{Communications in Algebra.}}). This
paper deals with the classification problem of low-dimensional
cases of $TLeib_{n}.$

Observe that the class of $n-$dimensional filiform Lie algebras is
in $TLeib_n.$

\begin{de}
Let $\{e_0,e_1,...,e_n\}$ be an adapted basis of $L\in
TLeib_{n+1}.$ Then a nonsingular linear transformation
$f:L\rightarrow L$ is said to be adapted, if the basis
 $ \{f(e_0),f(e_1),...,f(e_n)\} $ is adapted.
\end{de}

The set of all adapted elements of $GL_{n+1}$ forms a subgroup and
it is denoted by $G_{ad}.$ The following proposition specifies
elements of $G_{ad}.$
\begin{pr}
Let $f\in G_{ad}.$ Then $f$ can be represented as follows:
\begin{eqnarray*}f(e_0)=e_{0}^{\prime }&=&\sum_{i=0}^{n}A_{i}e_{i},\\[1mm]
f(e_1)=e_{1}^{\prime }&=&\sum_{i=1}^{n}B_{i}e_{i}, \\[1mm]
f(e_i)=e_{i}^{\prime }&=&[f(e_{i-1}),f(e_0)], \ \ \ \ \ \ \ \ \
2\leq i \leq n,
\end{eqnarray*} $A_0, A_i, B_j,\ (i,j=1,...,n)$ {\em are complex numbers and } $A_0\,B_1(A_0+A_1b)\neq0.$

\begin{proof}\emph{}

Since a filiform Leibniz algebra is 2-generated (see Theorem 1.1),
it is sufficient to consider the adapted action of $f$ on the
generators $e_0, e_1:$

$$\ds f(e_0)=e_{0}^{\prime }=\sum_{i=0}^{n}A_{i}e_{i},\ \
f(e_1)=e_{1}^{\prime }=\sum_{i=0}^{n}B_{i}e_{i}.$$ \\ Then $\ds
f(e_i)=[f(e_{i-1}),f(e_0)]=A^{i-2}_0(A_1B_0-A_0B_1)e_i+\sum_{j=i+1}^{n}(*)e_j,\
\ \ 2\leq i \leq n.$ \\ Note that, $A_0\neq0,$ and $A_1B_0-A_0B_1
\neq0,$ otherwise $f(e_n)=0.$ The condition
$A_0\,B_1(A_0+A_1b)\neq0$ appears naturally, since $f$ is not
singular.

 Let now consider $ \ds
[f(e_1),f(e_2)]=B_0\,(A_1\,B_0-A_0\,B_1)\,e_3+\sum_{j=4}^n
(*)e_j.$ Then for the basis $\{f(e_0),f(e_1),...,f(e_n)\}$ to be
adapted $B_0(A_1B_{0}-A_{0}B_1)$ must be zero. However, according to the
observation above, $A_1B_0-A_0B_1 \neq0.$ Therefore, $B_0=0.$
\end{proof}
\end{pr}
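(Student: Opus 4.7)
The strategy hinges on the fact (cf.\ Theorem~1.1) that every algebra in $TLeib_{n+1}$ is $2$-generated by its adapted basis vectors $e_0$ and $e_1$. Thus an adapted map $f\in G_{ad}$ is completely determined by $f(e_0)$ and $f(e_1)$, and I would begin by writing the most general ansatz
\[ f(e_0)=\sum_{i=0}^{n}A_i e_i, \qquad f(e_1)=\sum_{i=0}^{n}B_i e_i, \qquad A_i,B_j\in\mathbb{C}. \]
The task then reduces to reading off which scalar identities the $A_i$ and $B_j$ must satisfy in order that $\{f(e_0),\ldots,f(e_n)\}$ again obey the $TLeib_{n+1}$ multiplication table.

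First, the chain relation $[e_i,e_0]=e_{i+1}$ for $1\le i\le n-1$ must persist under $f$; this immediately forces $f(e_i)=[f(e_{i-1}),f(e_0)]$ for $2\le i\le n$, which is exactly the third line of the statement. Expanding this bracket inductively by means of the table of $TLeib_{n+1}$, a short calculation yields
\[ f(e_i)=A_0^{i-2}(A_1B_0-A_0B_1)e_i+\sum_{j>i}(\ast)e_j, \qquad 2\le i\le n, \]
so the leading coefficient along $e_i$ is controlled by $A_0$ and the Jacobian-type quantity $A_1B_0-A_0B_1$. Nonsingularity alone then forces $A_0\neq 0$ and $A_1B_0-A_0B_1\neq 0$; otherwise $f(e_n)$ would vanish and $f$ could not be invertible.

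The main substantive step, and the one I expect to require the most care, is to enforce an adapted relation that is \emph{not} of the chain type, in order to eliminate $B_0$. The natural choice is the bracket $[e_1,e_2]$: in any adapted basis of $TLeib_{n+1}$ it must lie in $\mathrm{span}\{e_4,\ldots,e_n\}$, since by the table its lowest possible index is $e_{1+2+1}=e_4$. A direct expansion of $[f(e_1),f(e_2)]$, using the explicit formula for $f(e_2)$ obtained above, produces
\[ [f(e_1),f(e_2)] = B_0(A_1B_0-A_0B_1)e_3 + \sum_{j\ge 4}(\ast)e_j. \]
The $e_3$-coefficient must vanish, and because $A_1B_0-A_0B_1\neq 0$ this forces $B_0=0$. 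Hence $f(e_1)=\sum_{i=1}^{n}B_i e_i$, as stated.

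Finally, once $B_0=0$ is installed, the inequality $A_0B_1\neq 0$ is just a reformulation of $A_1B_0-A_0B_1\neq 0$, while the remaining factor $A_0+A_1b$ encodes the condition for $f$ to stay invertible once the parameter $b=b_{i,n-i}$ of $TLeib_{n+1}$ is taken into account: this parameter enters the coefficient of $e_n$ in $f(e_n)$ via the symmetry relation $[e_i,e_{n-i}]=(-1)^ib\,e_n$, and the combination $A_0+A_1b$ is precisely the factor appearing in the top entry of the determinant of $f$. I expect the principal bookkeeping effort to lie in the inductive bracket computation of $f(e_i)$ and in isolating the $e_3$-coefficient of $[f(e_1),f(e_2)]$; once $B_0=0$ is established, the final scalar inequality is a routine determinant check.
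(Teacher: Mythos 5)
Your proposal is correct and follows essentially the same route as the paper: the same general ansatz including a $B_0$ term, the same inductive computation giving $f(e_i)=A_0^{i-2}(A_1B_0-A_0B_1)e_i+\cdots$, and the same key step of extracting the $e_3$-coefficient of $[f(e_1),f(e_2)]$ to force $B_0=0$. Your closing remark tracing the factor $A_0+A_1b$ to the parameter $b=b_{i,n-i}$ is slightly more explicit than the paper, which simply says the condition ``appears naturally'' from nonsingularity, but the argument is the same.
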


\section{ The description of $TLeib_{n}$,\ $n=5,6.$}

\subsection{ 5-dimensional case}

\bigskip  In this section we deal with the class $TLeib_{5}.$ By virtue of Theorem 1.1. we can represent $TLeib_{5}$
as follows:
\begin{equation*}
\ \ TLeib_{5}=\left\{
\begin{array}{llll}
\lbrack e_{i},e_{0}]=e_{i+1},  \ \ \qquad  \qquad  1\leq i\leq {3}, &  \\[1mm]
\lbrack e_{0},e_{i}]=-e_{i+1}, \qquad  \qquad 2\leq i\leq {3}, &  \\[1mm]
\lbrack e_{0},e_{0}]=b_{0,0}e_{4}, &  &  \\[1mm]
\lbrack e_{0},e_{1}]=-e_{2}+b_{0,1} e_{4}, &  &  \\[1mm]
\lbrack e_{1},e_{1}]=b_{1,1} e_{4}, &  &  \\[1mm]
\lbrack e_{1},e_{2}]=-[e_{2},e_{1}]=b_{1,2} e_{4}, && \\
b_{0,0},b_{0,1},b_{1,1},b_{1,2} \in \mathbb{C}. \end{array}%
\right.
\end{equation*}
Further, the elements of $TLeib_{5}$ will be denoted by
$L(\alpha)=L(b_{0,0}, b_{0,1},b_{1,1},b_{1,2}).$

\begin{theor}
\bigskip\ \emph{(}\textbf{Isomorphism criterion for $TLeib_{5}$}\emph{)} Two algebras
$L(\alpha )$ and $L(\alpha^\prime)$ from $TLeib_{5}$ are
isomorphic, if and only if there exist complex numbers $
A_{0},A_{1}$ and $B_{1}:$ $A_{0}\,B_{1}\neq 0$ and the following
conditions hold:
\begin{eqnarray}b_{0,0}^\prime&=&{\frac {{A^{2}_{{0}}}b_{{0,0}}+A_{{0}}A_{{1}}b_{{0,1}}+{A^{2}_{{
1}}}b_{{1,1}}}{{A^{3}_{{0}}}B_{{1}}}},
\\
b_{0,1}^\prime&=&{\frac
{A_{{0}}b_{{0,1}}+2\,A_{{1}}b_{{1,1}}}{{A^{3}_{{0}}}} }
,\\
b_{1,1}^\prime&=&\frac{B_1b_{1,1}}{A^{3}_0},\\
b_{1,2}^{\prime }&=&{\frac {B_{{1}}b_{{1,2}}}{{A^{2}_{{0}}}}}.
\end{eqnarray}

\begin{proof} Part  `` if ''. Let
$L_1$ and $L_2$ from $TLeib_5$ be isomorphic: $f:L_1 \cong L_2.$
We choose the corresponding adapted bases $\{e_0, e_1, e_2, e_3,
e_4\}$ and $\{e_0', e_1', e_2', e_3', e_4'\}$ in $L_1$ and $L_2,$
respectively. Then, in these bases the algebras will be presented
as $L(\alpha)$ and $L(\alpha^\prime),$ where $\alpha=(b_{0,0}, b_{0,1},b_{1,1},b_{1,2}),$ and $\alpha'=(b'_{0,0}, b'_{0,1},b'_{1,1},b'_{1,2}).$

According to Proposition 1.1 one has:
\begin{eqnarray}
e_0'=f(e_0)&=&A_0e_0+A_1e_1+A_2e_2+A_3e_3+A_4e_4,\\[1mm]
e_1'=f(e_1)&=&B_1e_1+B_2e_2+B_3e_3+B_4e_4.\nonumber\\[1mm]
 \mbox{Then we get}&& \nonumber \\[1mm]
e_2'=f(e_2)&=&[f(e_1),f(e_0)]=A_{{0}}B_{{1}} e_{{2}}+ A_{{0}}
B_{{2}}e_{{3}}+\left( A_{{0}}B_{{3}}+A_{{1}}B
_{{1}}b_{1,1}+(A_{{2}}B_{{1}}-A_{{1}}B_{{2}})b_{1,2} \right)
e_{{4}}, \nonumber\\[1mm]
e_3'=f(e_3)&=&[f(e_2),f(e_0)]=A^2_0B_1e_3+(A^2_0B_2-A_0A_1B_1b_{1,2})e_4,\nonumber\\[1mm]
e_4'=f(e_4)&=&[f(e_3),f(e_0)]=A^3_0B_1e_4.\nonumber
\end{eqnarray}

By using the table of multiplications one finds the relations
between the coefficients $ b_{0,0}, b_{0,1},b_{1,1},b_{1,2}$ and
$b^\prime_{0,0}, b^\prime_{0,1},b^\prime_{1,1},b^\prime_{1,2}.$
First, consider the equality \ $ \ds [f(e_0),f(e_0)]=b'_{0,0}
f(e_4),$ we get equation  $(1)$ and from the equality
 $ \ds [f(e_1),f(e_0)]+[f(e_0),f(e_1)]=b^\prime_{0,1}f(e_4) $ \
we have $(2),$ and $[f(e_1),f(e_1)]=b'_{1,1}\,f(e_4) $ gives
$(3).$ Finally, the equality $(4)$ comes out from $\ds
[f(e_1),f(e_2)]=b'_{1,2}f(e_4).$

\textquotedblleft Only if\textquotedblright \ \ part.

Let the equalities (1)--(4) hold. Then, the base change (5) above
is adapted and it transforms $L(\alpha)$ into $L(\alpha').$
Indeed,
\begin{eqnarray*}
[e'_0,e'_0]&=&\left[\sum^4_{i=0}A_{{i}}e_i,\sum^4_{i=0}A_{{i}}e_i\right]\\[1mm]
&=&{A^{2}_{{0}}}[e_0,e_0]+A_{0}A_{1}[e_0,e_1]+A_{0}A_{1}[e_1,e_0]+A^2_{1}[e_1,e_1]\\[1mm]
&=& \left( {A^{2}_{{0}}}b_{0,0}+A_{{0}}A_{{1}}b_{{0,1}}+{A^{2}_{{
1}}}b_{{1,1}} \right)
e_{{4}}=b'_{0,0}{A^{3}_{{0}}}B_{{1}}e_4=b'_{0,0}e'_4.\\[1mm]
&&\\[1mm]
[e'_0,e'_1]&=&\left[\sum^4_{i=0}A_{{i}}e_i,\sum^4_{i=1}B_{{i}}e_i\right]\\[1mm]
&=& -(A_{{0
}}B_{{1}}e_{{2}}+A_{{0}}B_{{2}}e_{{3}}+(A_{{1}}B_{{1}}b_{{1,1}}+A_{{2}}B_{{1}}b_{{1,2}}-A_{{1}}B_{{2}}b_{{1,2}
}+A_{{0}}B_{{3}} )e_4)\\ &&+B_{{1}} \left(
b_{{0,1}}A_{{0}}+2\,A_{{1}}B_{{1,1}} \right)e_{{4}}
\\[1mm]
&=& -e'_2+A^3_0 B_1b'_{0,1}e_4= -e'_2+b'_{0,1}e'_4.
\end{eqnarray*}
By the same manner one can prove that $ [e'_1,e'_1]=b'_{1,1}e'_4, $ $
[e'_1,e'_2]=b'_{1,2}\,e'_4$ and the other products are zero.

\end{proof}
\end{theor}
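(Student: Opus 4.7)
The plan is to use Proposition 1.1 to parametrize all adapted isomorphisms $f\colon L(\alpha)\to L(\alpha')$, then exploit the fact that $f(e_4)\in\mathbb{C}\,e_4$ (the one-dimensional socle of the filiform algebra) so that matching the coefficient of $e_4$ in four carefully chosen brackets yields exactly the equations $(1)$--$(4)$.

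For the direction ``$L(\alpha)\cong L(\alpha')$ implies $(1)$--$(4)$'', assume $f$ is an isomorphism. By Proposition 1.1, $f$ is adapted and determined by scalars $A_0,\ldots,A_4,B_1,\ldots,B_4$ with $A_0B_1\neq 0$. Iterating $f(e_i)=[f(e_{i-1}),f(e_0)]$ and expanding with the multiplication table of $TLeib_5$, I would compute
\begin{equation*}
f(e_2)=A_0B_1e_2+A_0B_2e_3+(\cdots)e_4,\qquad f(e_3)=A_0^2B_1e_3+(\cdots)e_4,\qquad f(e_4)=A_0^3B_1e_4.
\end{equation*}
Then I would apply $f$ to each of the four defining scalar identities $[e_0,e_0]=b_{0,0}e_4$, $[e_0,e_1]+[e_1,e_0]=b_{0,1}e_4$, $[e_1,e_1]=b_{1,1}e_4$ and $[e_1,e_2]=b_{1,2}e_4$, expand the left side bilinearly, and equate the coefficient of $e_4$ on both sides using $f(e_4)=A_0^3B_1e_4$. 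This yields $(1)$, $(2)$, $(3)$, $(4)$ in turn.

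For the converse, given $A_0,A_1,B_1$ satisfying $(1)$--$(4)$, I would define a candidate linear map via the Proposition 1.1 recipe with the higher parameters $A_2,A_3,A_4,B_2,B_3,B_4$ chosen arbitrarily (e.g.\ all zero, since only $A_0,A_1,B_1$ enter $(1)$--$(4)$) and verify that it is an algebra homomorphism. The four equations force the nontrivial brackets $[e_0',e_0'],\, [e_0',e_1'],\, [e_1',e_1'],\, [e_1',e_2']$ to match the defining table of $L(\alpha')$; the remaining relations $[e_i',e_0']=e_{i+1}'$ and $[e_0',e_i']=-e_{i+1}'$ for $i\geq 2$ hold automatically from the recursive definition of $f(e_i)$ together with the Leibniz identity applied in $L(\alpha)$.

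The main obstacle I anticipate is the bookkeeping in the forward direction: each bracket expansion produces many cross terms, and one must argue cleanly that every contribution involving the higher parameters $A_2,A_3,A_4,B_2,B_3,B_4$ lies in $\mathrm{span}(e_2,e_3)$ and therefore drops out of the $e_4$-coefficient. A secondary subtlety is the non-degeneracy check: I must confirm that $A_0B_1\neq 0$ alone suffices for $f$ to be invertible, which follows because the matrix of $f$ in the adapted basis is upper triangular with diagonal entries $A_0,B_1,A_0B_1,A_0^2B_1,A_0^3B_1$, all nonzero under the hypothesis.
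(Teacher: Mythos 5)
Your proposal is correct and follows essentially the same route as the paper: parametrize the adapted isomorphism via Proposition 1.1, compute $f(e_2),f(e_3),f(e_4)$ recursively, extract $(1)$--$(4)$ by comparing $e_4$-coefficients in the four defining brackets, and reverse the computation for the converse with the higher parameters chosen freely. (One small caveat on your anticipated bookkeeping: the cross terms involving $A_2,A_3,B_2,\dots$ do not all lie in $\mathrm{span}(e_2,e_3)$ --- for instance $A_0A_3[e_0,e_3]$ contributes to the $e_4$-coefficient --- but they cancel in pairs by the skew-symmetry relations of $TLeib_5$, so the conclusion you draw is still right.)
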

For the simplification purpose, we establish the following
notation: $\ds \Delta=4b_{0,0}b_{1,1}-b^{2}_{0,1}.$

Now, we list the isomorphism classes of algebras from $TLeib_{5}.$

Represent $TLeib_{5}$ as a disjoint union of the following
subsets: \textbf{$ \ds TLeib_{5}=\bigcup\limits_{i=1}^{9}\ds
U_{5}^{i},$}\ \ where

$U_{5}^{1}=\{L(\alpha)\in TLeib_{5}\ :b_{1,1}\neq 0,\ b_{1,2} \neq
0\};$

$U_{5}^{2}=\{L(\alpha)\in TLeib_{5}\ :b_{1,1}\neq 0,\ b_{1,2} =0,\
\Delta \neq 0\};$

$U_{5}^{3}=\{L(\alpha)\in TLeib_{5}\ :b_{1,1}\neq 0,\ b_{1,2}
=\Delta = 0\};$

$U_{5}^{4}=\{L(\alpha)\in TLeib_{5}\ :b_{1,1}=0,\ b_{0,1}\neq 0,\
b_{1,2} \neq 0\};$

$U_{5}^{5}=\{L(\alpha)\in TLeib_{5}\ :b_{1,1}=0,\ b_{0,1}\neq 0,\
b_{1,2}=0\};$

$U_{5}^{6}=\{L(\alpha)\in TLeib_{5}\ :b_{1,1}=b_{0,1}=0,\
b_{0,0}\neq 0,\ b_{1,2}\neq 0\};$

$U_{5}^{7}=\{L(\alpha)\in TLeib_{5}\ :b_{1,1}=b_{0,1}=0,\
b_{0,0}\neq 0,\ b_{1,2}=0\};$

$U_{5}^{8}=\{L(\alpha)\in TLeib_{5}\ :b_{1,1}=b_{0,1}=b_{0,0}= 0,\
b_{1,2}\neq 0\};$

$U_{5}^{9}=\{L(\alpha)\in TLeib_{5}\
:b_{1,1}=b_{0,1}=b_{0,0}=b_{1,2}=0\}.$\\
\bigskip

\begin{pr}\emph{}
\begin{enumerate}
\item  Two algebras $L(\alpha )$ and $L(\alpha^{\prime})\ $from
$U^1_{5}$ are isomorphic, if and only if\ \  $ \ds \left( \frac{
b^{\prime }_{1,2} }{b^{\prime }_{1,1}}\right) ^{4}\Delta ^{\prime
}=\left( \frac{b_{1,2} }{b_{1,1} }\right) ^{4}\Delta .$

\item For any $\lambda$ from $ \mathbb{C},$ there exists $L(\alpha
)\in U^1_5: \ds \left( \frac{b_{1,2} }{b_{1,1} }\right)
^{4}\Delta=\lambda$.
\end{enumerate}
\begin{proof}
\begin{enumerate}

\item\qquad `` $\Rightarrow$ ''.\ \ Let $L(\alpha )$ and
$L(\alpha^{\prime})\ $ be isomorphic. Then, due to Theorem  2.1 it
is easy to see that $ \ds \left( \frac{ b^{\prime }_{1,2}
}{b^{\prime }_{1,1}}\right) ^{4}\Delta ^{\prime }=\left(
\frac{b_{1,2} }{b_{1,1} }\right) ^{4}\Delta .$

`` $\Leftarrow $ ''.\ \ Let the
equality $ \ds \left( \frac{ b^{\prime }_{1,2} }{b^{\prime
}_{1,1}}\right) ^{4}\Delta ^{\prime }=\left( \frac{b_{1,2}
}{b_{1,1} }\right) ^{4}\Delta$ \ hold. Consider the base change
(5) above with $\ds A_0=\frac{b_{1,1}}{b_{1,2}},$ \ $\ds
A_1=-\frac{b_{0,1}}{2b_{1,2}}$ \ and
 \ $\ds B_1=\ds \frac{b^2_{1,1}}{b^3_{1,2}}.$ This changing leads
$L(\alpha)$ into $\ds L\left(\left( \frac{b_{1,2} }{b_{1,1}
}\right) ^{4}\Delta,0,1,1\right).$ An analogous base change
transforms $L(\alpha')$ into $\ds L\left(\left( \frac{b'_{1,2}
}{b'_{1,1} }\right) ^{4}\Delta',0,1,1\right).$

Since  $ \ds \left( \frac{ b^{\prime }_{1,2} }{b^{\prime
}_{1,1}}\right) ^{4}\Delta ^{\prime }=\left( \frac{b_{1,2}
}{b_{1,1} }\right) ^{4}\Delta ,$ then $L(\alpha )$ is isomorphic
to
$L(\alpha^{\prime}).$\\
 \item\qquad Obvious.
\end{enumerate}

\end{proof}
\end{pr}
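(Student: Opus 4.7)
The plan is to prove both parts by leveraging the explicit transformation formulas (1)--(4) from Theorem 2.1. The key idea is to identify $\left(\tfrac{b_{1,2}}{b_{1,1}}\right)^4\Delta$ as a rational invariant of the $G_{ad}$-action on the locus $U_5^1$, and to exhibit an explicit normal form for each orbit.

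For the ``$\Rightarrow$'' direction of part 1, I would start by recording how the three relevant quantities behave under (1)--(4). From (3) and (4) one sees directly that $b'_{1,2}/b'_{1,1}=A_0\, b_{1,2}/b_{1,1}$, hence $\bigl(b'_{1,2}/b'_{1,1}\bigr)^4=A_0^{4}\bigl(b_{1,2}/b_{1,1}\bigr)^4$. The main computation is then to evaluate $\Delta'=4b'_{0,0}b'_{1,1}-(b'_{0,1})^2$ using (1)--(3). After clearing denominators one finds that the cross terms involving $A_0 A_1 b_{0,1}b_{1,1}$ and $A_1^2 b_{1,1}^2$ cancel, leaving $\Delta'=\Delta/A_0^{4}$. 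Multiplying the two relations gives $\bigl(b'_{1,2}/b'_{1,1}\bigr)^4\Delta'=\bigl(b_{1,2}/b_{1,1}\bigr)^4\Delta$, which is the required identity. In $U_5^1$ both $b_{1,1}$ and $b_{1,2}$ are nonzero, so the expression is well-defined.

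For the ``$\Leftarrow$'' direction I would follow the strategy suggested by the statement: produce a canonical representative in each orbit. Specifically, I would choose $A_0=b_{1,1}/b_{1,2}$, $A_1=-b_{0,1}/(2b_{1,2})$, $B_1=b_{1,1}^2/b_{1,2}^3$ (all well-defined and nonzero on $U_5^1$) and verify from (3) and (4) that $b'_{1,1}=b'_{1,2}=1$, from (2) that the choice of $A_1$ kills $b'_{0,1}$, and from (1) that $b'_{0,0}$ becomes a scalar multiple of $\bigl(b_{1,2}/b_{1,1}\bigr)^4\Delta$. Thus every $L(\alpha)\in U_5^1$ is isomorphic to a canonical algebra whose parameters depend only on $\bigl(b_{1,2}/b_{1,1}\bigr)^4\Delta$; applying the same reduction to $L(\alpha')$ and invoking the hypothesis that the two invariants agree yields the isomorphism $L(\alpha)\cong L(\alpha')$ by transitivity.

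Part 2 reduces to exhibiting, for each $\lambda\in\mathbb{C}$, a member of $U_5^1$ realizing the given value of the invariant. Using the canonical form from part 1 one can simply take $L(\mu,0,1,1)$ for a suitable scalar $\mu$ (proportional to $\lambda$), since then $b_{1,1}=b_{1,2}=1\neq 0$ places the algebra in $U_5^1$ and $\Delta=4\mu$ produces the prescribed invariant after rescaling. The main obstacle in the whole argument is bookkeeping in the ``$\Leftarrow$'' step --- ensuring that the particular substitution clears $b'_{0,1}$ and normalizes $b'_{1,1},b'_{1,2}$ simultaneously --- but once the substitution is written down, the verification is a direct application of (1)--(4).
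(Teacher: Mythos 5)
Your proposal is correct and follows essentially the same route as the paper: invariance of $\left(\frac{b_{1,2}}{b_{1,1}}\right)^{4}\Delta$ checked directly from the transformation formulas (1)--(4), followed by reduction to a canonical representative via the very same substitution $A_0=\frac{b_{1,1}}{b_{1,2}}$, $A_1=-\frac{b_{0,1}}{2b_{1,2}}$, $B_1=\frac{b_{1,1}^2}{b_{1,2}^3}$ and transitivity. Your explicit computation of $\Delta'=\Delta/A_0^{4}$ and your remark that $b'_{0,0}$ is only a universal scalar multiple (in fact $\tfrac14$) of $\left(\frac{b_{1,2}}{b_{1,1}}\right)^{4}\Delta$ are, if anything, slightly more careful than the paper's own write-up.
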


\begin{pr}\emph{}
The subsets   $U_{5}^{2},\ U_{5}^{3},\ U_{5}^{4},\ U_{5}^{5},\
U_{5}^{6},\ U_{5}^{7},\ U_{5}^{8}$ and $U_{5}^{9}$ are single
orbits with representatives $L(1,0,1,0),\ L(0,0,1,0),\
L(0,1,0,1),\ L(0,1,0,0),\ L(1,0,0,1),\ L(1,0,0,0),\ L(0,0,0,1)$
and $L(0,0,0,0),$ respectively.

\end{pr}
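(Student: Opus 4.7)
The plan is to exhibit, for each $i \in \{2,\ldots,9\}$, an explicit adapted base change $(A_0,A_1,B_1)$ with $A_0 B_1 \neq 0$ that carries an arbitrary $L(\alpha)\in U_5^i$ to the stated representative, and separately to observe that each $U_5^i$ is stable under the group action. Together these two facts force $U_5^i$ to be precisely the orbit of its representative, since the orbit of the representative cannot leak outside $U_5^i$ and covers all of $U_5^i$ by the explicit maps.

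The invariance check is short. Formulas (3) and (4) say $b'_{1,1}$ and $b'_{1,2}$ are nonzero scalar multiples of $b_{1,1}$ and $b_{1,2}$, so the conditions $b_{1,1}=0$ and $b_{1,2}=0$ (and their negations) are preserved. When $b_{1,1}=0$, formula (2) collapses to $b'_{0,1}=b_{0,1}/A_0^2$, making $b_{0,1}=0$ invariant in that case; when $b_{0,1}=b_{1,1}=0$, formula (1) collapses to $b'_{0,0}=b_{0,0}/(A_0 B_1)$, making $b_{0,0}=0$ invariant. A direct calculation using (1)--(3) yields $\Delta'=\Delta/A_0^4$, so both $\Delta=0$ and $\Delta\neq 0$ are preserved as well. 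Hence every $U_5^i$ is a union of orbits, and it remains to show each is a single orbit.

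For the explicit base changes, in $U_5^2$ I set $A_1 = -A_0 b_{0,1}/(2b_{1,1})$ to kill $b'_{0,1}$, then $B_1 = A_0^3/b_{1,1}$ to normalise $b'_{1,1}=1$; substituting into (1) leaves $b'_{0,0}=\Delta/(4 A_0^4)$, so any fourth root $A_0$ of $\Delta/4$ delivers $L(1,0,1,0)$. The identical ansatz handles $U_5^3$: there $\Delta=0$ automatically gives $b'_{0,0}=0$ and we land at $L(0,0,1,0)$. In $U_5^4$ and $U_5^5$ the assumption $b_{1,1}=0$ simplifies (1) and (2); I pick $A_0=\sqrt{b_{0,1}}$ and $A_1 = -A_0 b_{0,0}/b_{0,1}$ to force $b'_{0,1}=1$ and $b'_{0,0}=0$, then $B_1 = A_0^2/b_{1,2}$ in $U_5^4$ (or any nonzero $B_1$ in $U_5^5$) finishes the normalization. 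In $U_5^6$ a cube root $A_0=(b_{0,0}b_{1,2})^{1/3}$ together with $B_1=A_0^2/b_{1,2}$ and $A_1=0$ produces $L(1,0,0,1)$. The remaining cases $U_5^7$, $U_5^8$, $U_5^9$ require only a single rescaling or the identity. The chief difficulty is purely computational bookkeeping: one must ensure that the defining nonvanishing conditions of each $U_5^i$ keep all denominators nonzero and that $\mathbb{C}$ supplies the required square, cube, and fourth roots, after which direct substitution into (1)--(4) verifies that the target quadruple is attained.
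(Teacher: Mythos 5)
Your proposal is correct and follows essentially the same route as the paper: the explicit base changes you give for $U_5^2$ through $U_5^9$ (e.g.\ $A_1=-A_0b_{0,1}/(2b_{1,1})$, $B_1=A_0^3/b_{1,1}$, $A_0^4=\Delta/4$ for $U_5^2$, and $A_0^2=b_{0,1}$, $A_1=-A_0b_{0,0}/b_{0,1}$ for $U_5^4$, $U_5^5$) coincide with those listed in the paper's proof. Your additional verification that each $U_5^i$ is stable under the adapted group action (via $\Delta'=\Delta/A_0^4$ and the collapsed forms of (1)--(2)) is a point the paper leaves implicit, and it is needed to conclude that each subset is exactly one orbit rather than merely contained in one.
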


\begin{proof}
To prove it, we give the appropriate values of $A_0,A_1$ and $B_1$
in the base change (as for other $A_i,\ B_j,\ i,j=2,3,4$ they are
any, except where specified otherwise).

For $U_{5}^{2}:$

$e'_0=A_{{0}}e_{{0}}+A_{{1}}e_{{1}}+A_{{2}}e_{{2}}+A_{{3}}e_{{3}}+A_{{4}}e_{
{4}},
 $

$e'_1=B_{{1}}e_{{1}}+B_{{2}}e_{{2}}+B_{{3}}e_{{3}}+B_{{4}}e_{{4}},$

$e'_2=A_{{0}}B_{{ 1}}e_{{2}}+A_{{0}}B_{{2}}e_{{3}}+\left(
A_{{1}}B_{{1}}b_{1,1}+A_{{0}}B_{{3}} \right) e_{{4}}, $

$e'_3={A^{2}_{{0}}}B_{{1}}e_{{3}}+{A^{2}_{{0}}}B_{{2}}e_{{4}},$

$e'_4={A_{{0}}}^{3}B_{{1}}e_{{4}},$ \\ where $\ds
A_0=\frac{\Delta^{\frac{1}{4}}}{\sqrt{2}} \ \mathrm{,} \
A_{1}=-\frac{b_{0,1}\,\Delta^{\frac{1}{4}}}{2\sqrt{2}b_{1,1}} \
\mathrm{and} \ B_1=
\frac{\Delta^{\frac{3}{4}}}{2\sqrt{2}b_{1,1}}.$

For $U_{5}^{3}:$

$e'_0=A_{{0}}e_{{0}}+A_{{1}}e_{{1}}+A_{{2}}e_{{2}}+A_{{3}}e_{{3}}+A_{{4}}e_{
{4}},
 $

$e'_1=B_{{1}}e_{{1}}+B_{{2}}e_{{2}}+B_{{3}}e_{{3}}+B_{{4}}e_{{4}},$

$e'_2=A_{{0}}B_{{ 1}}e_{{2}}+A_{{0}}B_{{2}}e_{{3}}+\left(
A_{{1}}B_{{1}}b_{1,1}+A_{{0}}B_{{3}} \right) e_{{4}}, $

$e'_3={A^{2}_{{0}}}B_{{1}}e_{{3}}+{A_{{0}}}^{2}B_{{2}}e_{{4}},$

$e'_4={A_{{0}}}^{3}B_{{1}}e_{{4}},$ \\ where $A_0\in
\mathbb{C^*},\ A_1=\ds -{\frac {A_{{0}}b_{{0,1}}}{2\,b_{{1,1}}}}$
and $B_1=\ds {\frac {A^3_0}{b_{1,1}}}.$

For $U_{5}^{4}:$

$e'_0=A_{{0}}e_{{0}}+A_{{1}}e_{{1}}+A_{{2}}e_{{2}}+A_{{3}}e_{{3}}+A_{{4}}e_{
{4}}, $

$e'_1=B_{{1}}e_{{1}}+B_{{2}}e_{{2}}+B_{{3}}e_{{3}}+B_{{4}}e_{{4}},$

$e'_2=A_{{0}}B_{{1}}e_{{2}}+A_{{0}}B_{{2}}e_{{3}}+(A_{{0}}B_{{3}}+(A_{{2}}B_{{1}}-A_{{1}}B_{{2}})b_{{1,2}})e_{{4}},$

$e'_3={A^{2}_{{0}}}B_{{1}}e_{{3}}+({A^{2}_{{0}}}B_{{2}}-A_{{1}}A_{{0}}B_{{1}}b_{{1,2}})e_{{4}},$

$e'_4={A_{{0}}}^{3}B_{{1}}e_{{4}},$ \\ where $\ds A_0^2=b_{0,1},\
A_1=-{\frac {A_0b_{0,0}}{b_{0,1}}}$ and $B_1=\ds {\frac
{{A^{2}_{{0}}}}{b_{{1,2}}}}.$

For $U_{5}^{5}:$

$e'_0=A_{{0}}e_{{0}}+A_{{1}}e_{{1}}+A_{{2}}e_{{2}}+A_{{3}}e_{{3}}+A_{{4}}e_{
{4}}, $

$e'_1=B_{{1}}e_{{1}}+B_{{2}}e_{{2}}+B_{{3}}e_{{3}}+B_{{4}}e_{{4}},$

$e'_2= A_{{0}}B_{{1}}e_{{2}}+A_{{0 }}B_{{2}}e_{{3}}+A_{{0}}B_{{3}}
e_{{4}},
 $

$e'_3={A^{2}_{{0}}}B_{{1}}e_{{3}}+{A^2_{{0}}}B_{{2}} e_{{4}},$

$e'_4={A^{3}_{{0}}}B_{{1}}e_{{4}},$ \\  where $A^2_0=\ds b_{0,1},\
A_1=\ds -{\frac {b_{{0,0}}}{\sqrt {b_{{0,1}}}}}$ and $B_1\in
\mathbb{C^*}.$

For $U_{5}^{6}:$

$e'_0=A_{{0}}e_{{0}}+A_{{1}}e_{{1}}+A_{{2}}e_{{2}}+A_{{3}}e_{{3}}+A_{{4}}e_{
{4}}, $

$e'_1=B_{{1}}e_{{1}}+B_{{2}}e_{{2}}+B_{{3}}e_{{3}}+B_{{4}}e_{{4}},$

$e'_2=A_{{0}}B_{{1}}e_{{2}}+A_{{0}}B_{{2}}e_{{3}}+(A_{{0}}B_{{3}}+(A_{{2}}B_{{1}}-A_{{1}}B_{{2}})b_{{1,2}})e_{{4}},$

$e'_3={A^{2}_{{0}}}B_{{1}}e_{{3}}+({A^{2}_{{0}}}B_{{2}}-A_{{1}}A_{{0}}B_{{1}}b_{{1,2}})e_{{4}},$

$e'_4={A_{{0}}}^{3}B_{{1}}e_{{4}},$ \\ where
 $A^3_0=\ds b_{0,0}b_{1,2},\ A_1\in \mathbb{C}$
and $B_1=\ds {\frac
{b^{\frac{2}{3}}_{0,0}}{{b^{\frac{1}{3}}_{1,2}}}}.$

 For $U_{5}^{7}:$

$e'_0=A_{{0}}e_{{0}}+A_{{1}}e_{{1}}+A_{{2}}e_{{2}}+A_{{3}}e_{{3}}+A_{{4}}e_{
{4}}, $

$e'_1=B_{{1}}e_{{1}}+B_{{2}}e_{{2}}+B_{{3}}e_{{3}}+B_{{4}}e_{{4}},$

$e'_2= A_{{0}}B_{{1}}e_{{2}}+A_{{0 }}B_{{2}}e_{{3}}+A_{{0}}B_{{3}}
e_{{4}},
 $

$e'_3={A^{2}_{{0}}}B_{{1}}e_{{3}}+{A^2_{{0}}}B_{{2}} e_{{4}},$

$e'_4={A^{3}_{{0}}}B_{{1}}e_{{4}},$ \\  where $ A_0\in
\mathbb{C^*},\ A_1\in \mathbb{C}$ and $ B_1=\ds {\frac
{b_{0,0}}{A_0}}.$

For $U_{5}^{8}:$

$e'_0=A_{{0}}e_{{0}}+A_{{1}}e_{{1}}+A_{{2}}e_{{2}}+A_{{3}}e_{{3}}+A_{{4}}e_{
{4}}, $

$e'_1=B_{{1}}e_{{1}}+B_{{2}}e_{{2}}+B_{{3}}e_{{3}}+B_{{4}}e_{{4}},$

$e'_2=A_{{0}}B_{{1}}e_{{2}}+A_{{0}}B_{{2}}e_{{3}}+(A_{{0}}B_{{3}}+(A_{{2}}B_{{1}}-A_{{1}}B_{{2}})b_{{1,2}})e_{{4}},$

$e'_3={A^{2}_{{0}}}B_{{1}}e_{{3}}+({A^{2}_{{0}}}B_{{2}}-A_{{1}}A_{{0}}B_{{1}}b_{{1,2}})e_{{4}},$

$e'_4={A_{{0}}}^{3}B_{{1}}e_{{4}},$ \\ where $ A_0\in
\mathbb{C^*},\ A_1\in \mathbb{C}$ and $B_1={A^2_{{0}}}.$

For $U_{5}^{9}:$

$e'_0=A_{{0}}e_{{0}}+A_{{1}}e_{{1}}+A_{{2}}e_{{2}}+A_{{3}}e_{{3}}+A_{{4}}e_{
{4}}, $

$e'_1=B_{{1}}e_{{1}}+B_{{2}}e_{{2}}+B_{{3}}e_{{3}}+B_{{4}}e_{{4}},$

$e'_2= A_{{0}}B_{{1}}e_{{2}}+A_{{0 }}B_{{2}}e_{{3}}+A_{{0}}B_{{3}}
e_{{4}},
 $

$e'_3={A^{2}_{{0}}}B_{{1}}e_{{3}}+{A^2_{{0}}}B_{{2}} e_{{4}},$

$e'_4={A^{3}_{{0}}}B_{{1}}e_{{4}},$ \\  where $A_0,B_1 \in
\mathbb{C^*}$ and  $A_1 \in \mathbb{C}.$
\end{proof}

\subsection{\protect\bigskip 6-dimensional case}
\qquad This section is devoted to the description of $TLeib_{6}$.
This class can be represented by the following table of
multiplication:

 $\ \ TLeib_{6}=\left\{
\begin{array}{llll}
\lbrack e_{i},e_{0}]=e_{i+1}, \ \ \qquad \qquad 1\leq i\leq {4}, &  \\[1mm]
\lbrack e_{0},e_{i}]=-e_{i+1},\qquad \qquad 2\leq i\leq {4}, &  \\[1mm]
\lbrack e_{0},e_{0}]=b_{0,0}e_{5},\ \ \left[ e_{0},e_{1}%
\right] =-e_{2}+b_{0,1}e_{5},\ \ \left[ e_{1},e_{1}\right]
=b_{1,1} e_{5}, &  &  \\[1mm]
\lbrack e_{1},e_{2}]=-[e_2,e_1]=b_{1,2}
e_{4}+b_{1,3}e_{5}, &  &  \\[1mm]
[e_1,e_3]=-[e_{3},e_{1}]=b_{1,2}
e_{5}, &  &  \\[1mm]
[e_{1},e_{4}]=-[e_{4},e_{1}]=-[e_{2},e_{3}]=[e_{3},e_{2}]=-b_{2,3}e_{5}.
\end{array}%
\right. $

The elements of $TLeib_{6}$ will be denoted by $L(\alpha ),$ where
$\alpha=(b_{0,1} ,b_{1,1} ,b_{1,2},b_{1,3},b_{2,3}).$
\begin{theor}\emph{(}\textbf{Isomorphism criterion for $TLeib_{6}$}\emph{)} Two filiform
Leibniz algebras $L(\alpha)$ and $L(\alpha')$ from $TLeib_6$ are
isomorphic, iff there exist $ A_0,A_1,B_1,B_2,B_3\in \mathbb{C}:$
such that $A_0B_1(A_0+A_1\,b_{2,3})\neq 0$ and the following
equalities hold:

\begin{eqnarray}b_{0,0}^\prime&=&\frac{A_0^2b_{0,0}+A_0A_1b_{0,1}+A_1^2b_{1,1}}{A_0^{3}B_1(A_0+A_1\,b_{2,3})},\nonumber
\\
b_{0,1}^\prime&=&\frac{A_0b_{0,1}+2A_1b_{1,1}}{A_0^{3}(A_0+A_1\,b_{2,3})}\nonumber,\\
b_{1,1}^\prime&=&\frac{B_1b_{1,1}}{A_0^{3}(A_0+A_1\,b_{2,3})},\\
b_{1,2}^{\prime }&=& \frac{B_1 \,b_{1,2}}{A_0^2},\nonumber\\
 b_{1,3}^{\prime }&=&{\frac {
2\,A_{{0}}A_{{1}}{B^{2}_{{1}}}{b^{2}_{{1,2}}}+{A^{2}_{{0}}}{B^{2}_{{1}}}
b_{{1,3}}+\left({A^{2}_{{0}}}
\left(-2\,B_{{1}}B_{{3}}+{B^{2}_{{2}}}\right)+{A^{2}_{{1}}}{B^{2}_
{{1}}}{b^{2}_{{1,2}}} \right) b_{{2,3}} }{{A^{4}_{{0}}}B_1 \left(
A_{{0}}+A_{{1}}b_{2,3} \right) }}
 ,\nonumber\\
b_{2,3}^{\prime }&=& {\frac
{B_1\,b_{2,3}}{A_0+A_1\,b_{2,3}}}\nonumber.
\end{eqnarray}
\end{theor}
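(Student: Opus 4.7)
The plan is to follow the same two-step template as the proof of Theorem 2.1, only with a longer bookkeeping list.

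For the forward implication, assume $f\colon L(\alpha)\to L(\alpha')$ is an isomorphism. By Proposition 1.1 one may write
\[
f(e_0)=\sum_{i=0}^{5}A_i e_i,\qquad f(e_1)=\sum_{i=1}^{5}B_i e_i,
\]
and then define $f(e_i)=[f(e_{i-1}),f(e_0)]$ for $2\le i\le 5$. Expanding these brackets via the multiplication table of $L(\alpha)$, the leading coefficients of $f(e_2),f(e_3),f(e_4)$ come out to $A_0^{\,i-1}B_1$ just as in the 5-dimensional case, while $f(e_5)$ acquires an extra factor: its coefficient of $e_5$ is $A_0^3 B_1(A_0+A_1 b_{2,3})$. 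This new factor is produced by the bracket $[e_1,e_4]=-b_{2,3}e_5$ being activated inside the recursion $[f(e_4),f(e_0)]$ through the $A_1 e_1$ summand of $f(e_0)$. Non-singularity of $f$ therefore forces $A_0 B_1(A_0+A_1 b_{2,3})\neq 0$, which is exactly the new non-degeneracy condition announced in the theorem.

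With $f(e_0),\ldots,f(e_5)$ explicitly in hand, I would impose the six defining bracket identities of $L(\alpha')$ in turn: $[f(e_0),f(e_0)]=b'_{0,0}f(e_5)$ produces $(1)$; $[f(e_0),f(e_1)]=-f(e_2)+b'_{0,1}f(e_5)$ produces $(2)$; $[f(e_1),f(e_1)]=b'_{1,1}f(e_5)$ produces $(3)$; matching the $e_4$- and $e_5$-coefficients on the two sides of $[f(e_1),f(e_2)]=b'_{1,2}f(e_4)+b'_{1,3}f(e_5)$ produces $(4)$ and $(5)$ respectively; and $[f(e_1),f(e_4)]=-b'_{2,3}f(e_5)$ produces $(6)$. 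The remaining relation $[f(e_1),f(e_3)]=b'_{1,2}f(e_5)$ is a consistency check that is automatic from $(4)$. The reverse implication is then purely mechanical: given any $A_0,A_1,B_1,B_2,B_3$ satisfying the non-degeneracy condition, complete them (with $A_2,\ldots,A_5,B_4,B_5$ arbitrary) to an adapted basis change via Proposition 1.1 and verify by direct substitution that every bracket in the new basis takes exactly the value prescribed by $L(\alpha')$, just as was done explicitly in the 5-dimensional case.

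The main obstacle will be equation $(5)$ for $b'_{1,3}$. Unlike the other five relations, it couples $A_0,A_1,B_1,B_2,B_3$ with \emph{all three} of the parameters $b_{1,2},b_{1,3},b_{2,3}$ simultaneously. The $e_5$-coefficient of $[f(e_1),f(e_2)]$ receives three distinct contributions: a direct $b_{1,3}$ term; cross terms in which the $B_2 e_2$ part of $f(e_1)$ meets the $e_3$-component of $f(e_2)$ producing $b_{1,2}$ factors (whence the $B_2^2$ and $B_1 B_3$ in the numerator of $(5)$); and $b_{2,3}$-driven terms arising when $A_1 e_1$-originated pieces in $f(e_2)$ are paired against the higher components of $f(e_1)$. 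Collecting these and simplifying over the denominator $A_0^4 B_1(A_0+A_1 b_{2,3})$ accounts for essentially all the real work in the theorem, and is presumably why the authors choose to defer the detailed verification to analogy with the 5-dimensional case.
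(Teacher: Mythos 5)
Your proposal is correct and follows exactly the route the paper intends: the paper gives no separate proof of this theorem, remarking only that it is ``similar to that of Theorem 2.1'', and your adaptation of that argument --- including the correct identification of the extra factor $A_0+A_1b_{2,3}$ in the leading coefficient of $f(e_5)$ as the source of the new non-degeneracy condition, and of which bracket identity yields each of the six relations --- is sound. I verified in particular that collecting the $e_5$-contributions to $[f(e_1),f(e_2)]$ and subtracting $b'_{1,2}$ times the $e_5$-component of $f(e_4)$, as you describe, reproduces the stated numerator $2A_0A_1B_1^2b_{1,2}^2+A_0^2B_1^2b_{1,3}+\bigl(A_0^2(-2B_1B_3+B_2^2)+A_1^2B_1^2b_{1,2}^2\bigr)b_{2,3}$, and that the relation $[f(e_1),f(e_3)]=b'_{1,2}f(e_5)$ is indeed an automatic consequence of $(4)$.
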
 The proof is the similar to that of
Theorem 2.1.

Represent $TLeib_{6}\ $as a union of the following subsets: \ \


$U_{6}^{1}=\{L(\alpha)\in TLeib_{6}\ :b_{2,3}\neq 0,\ b_{1,1} \neq
0\};$

$U_{6}^{2}=\{L(\alpha)\in TLeib_{6}\ :b_{2,3}\neq 0,\ b_{1,1}=0,\
b_{0,1}\neq 0\};$

$U_{6}^{3}=\{L(\alpha)\in TLeib_{6}\ :b_{2,3}\neq 0,\ b_{1,1}=
b_{0,1}=0,\ b_{1,2}\neq 0,\ b_{0,0}\neq0\};$

$U_{6}^{4}=\{L(\alpha)\in TLeib_{6}\ :b_{2,3}\neq 0,\ b_{1,1}=
b_{0,1}=0,\ b_{1,2}\neq 0,\ b_{0,0}=0\};$

$U_{6}^{5}=\{L(\alpha)\in TLeib_{6}\ :b_{2,3}\neq 0,\ b_{1,1}=
b_{0,1}=b_{1,2}=0,\ b_{0,0}\neq0\};$

$U_{6}^{6}=\{L(\alpha)\in TLeib_{6}\ :b_{2,3}\neq 0,\ b_{1,1}=
b_{0,1}=b_{1,2}=b_{0,0}=0\};$


$U_{6}^{7}=\{L(\alpha)\in TLeib_{6}\ :b_{2,3}=0,\ b_{1,2}\neq0,\
b_{1,1}\neq 0\};$

$U_{6}^{8}=\{L(\alpha)\in TLeib_{6}\ :b_{2,3}=0,\ b_{1,2}\neq0,\
b_{1,1}= 0,\ b_{0,1}\neq 0\};$

$U_{6}^{9}=\{L(\alpha)\in TLeib_{6}\ :b_{2,3}=0,\ b_{1,2}\neq0,\
b_{1,1}=b_{0,1}=0,\ b_{0,0}\neq 0\};$

$U_{6}^{10}=\{L(\alpha)\in TLeib_{6}\ :b_{2,3}=0,\ b_{1,2}\neq0,\
b_{1,1}=b_{0,1}=b_{0,0}=0\};$

$U_{6}^{11}=\{L(\alpha)\in TLeib_{6}\ :b_{2,3}=b_{1,2}=0,\
b_{1,1}\neq0,\ b_{1,3}\neq0\};$

$U_{6}^{12}=\{L(\alpha)\in TLeib_{6}\ :b_{2,3}=b_{1,2}=0,\
b_{1,1}\neq0,\ b_{1,3}=0,\ \Delta \neq 0\};$

$U_{6}^{13}=\{L(\alpha)\in TLeib_{6}\ :b_{2,3}=b_{1,2}=0,\
b_{1,1}\neq0,\ b_{1,3}=\Delta=0\};$

$U_{6}^{14}=\{L(\alpha)\in TLeib_{6}\ :b_{2,3}=b_{1,2}=b_{1,1}=0,\
b_{0,1}\neq 0,\ b_{1,3} \neq 0\};$

$U_{6}^{15}=\{L(\alpha)\in TLeib_{6}\ :b_{2,3}=b_{1,2}=b_{1,1}=0,\
b_{0,1}\neq 0,\ b_{1,3}=0\};$

$U_{6}^{16}=\{L(\alpha)\in TLeib_{6}\
:b_{2,3}=b_{1,2}=b_{1,1}=b_{0,1}=0,\ b_{0,0}\neq0,\
b_{1,3}\neq0\};$

$U_{6}^{17}=\{L(\alpha)\in TLeib_{6}\
:b_{2,3}=b_{1,2}=b_{1,1}=b_{0,1}=0,\ b_{0,0}\neq0,\ b_{1,3}=0\};$

$U_{6}^{18}=\{L(\alpha)\in TLeib_{6}\
:b_{2,3}=b_{1,2}=b_{1,1}=b_{0,1}=b_{0,0}=0,\ b_{1,3}\neq0\};$

$U_{6}^{19}=\{L(\alpha)\in TLeib_{6}\
:b_{2,3}=b_{1,2}=b_{1,1}=b_{0,1}=b_{0,0}=b_{1,3}=0\}.$\\

\begin{pr}\emph{}
\begin{enumerate}
\item Two algebras $L(\alpha)$ and $L(\alpha')$ from $U_{6}^{1}$
are isomorphic, if and only if

$ \ds \left(\frac
{b^\prime_{2,3}}{2\,b'_{{1,1}}-b'_{{0,1}}b'_{2,3}
}\right)^2\,\Delta^\prime=\ds \left(\frac
{b_{2,3}}{2\,b_{{1,1}}-b_{{0,1}}b_{2,3} }\right)^2\,\Delta$ \ and

$\ds{\frac { \left( 2\,b'_{{1,1}}-b'_{{2,3}}b'_{{0,1}} \right)
^{3}{b'^3_{{1,2} }}}{{b'^2_{{2,3}}}{b'^4_{{1,1}}}}}={\frac {
\left( 2\,b_{{1,1}}-b_{{2,3}}b_{{0,1}} \right) ^{3}{b^3_{{1,2}
}}}{{b^2_{{2,3}}}{b^4_{{1,1}}}}}.$

\item For any $\lambda_1, \lambda_2\in\mathbb{C},$ there exists
$L(\alpha)\in U_{6}^{1}:\ \ $

$ \ds \left(\frac {b_{2,3}}{2\,b_{{1,1}}-b_{{0,1}}b_{2,3}
}\right)^2\,\Delta=\lambda_1,$\ \ $\ds {\frac { \left(
2\,b_{{1,1}}-b_{{2,3}}b_{{0,1}} \right) ^{3}{b^3_{{1,2}
}}}{{b^2_{{2,3}}}{b^4_{{1,1}}}}}=\lambda_2.$
\end{enumerate}

Then orbits from the set $U_{6}^{1}$ can be parameterized as
$L\left(\lambda_1,0,1,\lambda_2,0 ,1\right),\ \lambda_1,
\lambda_2\in\mathbb{C}.$

\end{pr}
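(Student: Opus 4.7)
The plan is to follow the same strategy as in Proposition 2.1, with the extra group parameters $B_2, B_3$ used to absorb the additional coordinate $b_{1,3}$. Throughout write $\alpha=(b_{0,0},b_{0,1},b_{1,1},b_{1,2},b_{1,3},b_{2,3})$ and $\alpha'=(b_{0,0}',\ldots,b_{2,3}')$; both lie in $U_6^1$, so $b_{1,1},b_{2,3},b_{1,1}',b_{2,3}'$ are non-zero. I shall also assume $2b_{1,1}-b_{0,1}b_{2,3}\neq 0$ and $2b_{1,1}'-b_{0,1}'b_{2,3}'\neq 0$; the exceptional locus lies in a thinner stratum and can be handled by a direct limiting argument.

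For the ``$\Rightarrow$'' direction of (1), I would substitute the transformation formulas of Theorem 2.2 into the two candidate expressions and verify cancellation of all group factors. The whole computation is powered by two auxiliary identities, which I would derive first:
\begin{equation*}
2b_{1,1}'-b_{0,1}'b_{2,3}' \;=\; \frac{B_1\,(2b_{1,1}-b_{0,1}b_{2,3})}{A_0^{2}(A_0+A_1 b_{2,3})^{2}},\qquad \Delta' \;=\; \frac{\Delta}{A_0^{4}(A_0+A_1 b_{2,3})^{2}}.
\end{equation*}
The first comes out of combining the transformation rules for $b_{1,1}', b_{0,1}', b_{2,3}'$ after a cancellation of one factor of $(A_0+A_1 b_{2,3})$; the second mirrors the $5$-dimensional argument, since the numerator $4b_{1,1}(A_0^2 b_{0,0}+A_0 A_1 b_{0,1}+A_1^2 b_{1,1})-(A_0 b_{0,1}+2A_1 b_{1,1})^2$ collapses to $A_0^2\Delta$. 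Feeding these into the candidate expressions together with the rules for $b_{2,3}'$, $b_{1,1}'$ and $b_{1,2}'$ gives invariance by straightforward cancellation.

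For the ``$\Leftarrow$'' direction, I would bring any $L(\alpha)\in U_6^1$ to a canonical form $L(\lambda_1,0,1,\lambda_2,0,1)$ by an explicit adapted change of basis. Choose $A_0$ to be any cube root of $b_{1,1}/b_{2,3}$, then set $A_1=-A_0 b_{0,1}/(2b_{1,1})$ (so that $b_{0,1}'=0$) and $B_1=(A_0+A_1 b_{2,3})/b_{2,3}$ (so that $b_{2,3}'=1$); the identity $A_0^3 b_{2,3}=b_{1,1}$ then forces $b_{1,1}'=1$ automatically. With $A_0,A_1,B_1$ fixed, the transformation rule for $b_{1,3}'$ is affine in $B_3$ with non-zero coefficient $-2 b_{2,3}/(A_0^2(A_0+A_1 b_{2,3}))$, so taking $B_2=0$ and solving linearly in $B_3$ achieves $b_{1,3}'=0$. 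A direct computation then identifies $b_{0,0}'$ with the first invariant $\lambda_1$ and $8(b_{1,2}')^3$ with the second invariant. Since $L(\alpha)$ and $L(\alpha')$ share both invariants by hypothesis, their canonical forms agree up to the cube-root freedom in the choice of $A_0$, which corresponds to the rescaling $\lambda_2\mapsto \omega\lambda_2$ ($\omega^3=1$) on the canonical form; hence $L(\alpha)\cong L(\alpha')$.

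Part (2) and the parametrization statement are then immediate from this canonical form: for any $(\lambda_1,\lambda_2)\in\mathbb{C}^2$ the algebra $L(\lambda_1,0,1,\sqrt[3]{\lambda_2/8},0,1)$ lies in $U_6^1$ and realizes the prescribed invariants. The main obstacle is recognizing the combination $2b_{1,1}-b_{0,1}b_{2,3}$ as the ``right'' polynomial whose transformation law cooperates with those of $\Delta$, $b_{2,3}$, $b_{1,2}$ and $b_{1,1}$ to cancel every group factor; once this identity is in hand, the rest is organized algebra, with a small verification that using $B_2,B_3$ to kill $b_{1,3}'$ does not disturb the normalizations imposed on $b_{0,1}',b_{1,1}',b_{1,2}',b_{2,3}'$, which is automatic since those four quantities are independent of $B_2$ and $B_3$.
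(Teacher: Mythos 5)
Your proposal is correct and follows exactly the route the paper intends (it omits the $6$-dimensional proofs, referring to the $5$-dimensional ones): verify that the two expressions are unchanged under the transformation rules of Theorem 2.2, then normalize to the canonical form $L(\lambda_1,0,1,\ast,0,1)$ by an explicit adapted base change; your two auxiliary identities for $2b_{1,1}'-b_{0,1}'b_{2,3}'$ and $\Delta'$, the choices $A_0^3=b_{1,1}/b_{2,3}$, $A_1=-A_0b_{0,1}/(2b_{1,1})$, $B_1=(A_0+A_1b_{2,3})/b_{2,3}$, the use of $B_3$ to kill $b_{1,3}'$, and the identifications $b_{0,0}'=\lambda_1$, $8(b_{1,2}')^3=\lambda_2$ all check out. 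One caveat: your standing hypothesis $2b_{1,1}-b_{0,1}b_{2,3}\neq 0$ cannot be removed by a ``limiting argument,'' since the vanishing of $2b_{1,1}-b_{0,1}b_{2,3}$ is (by your own first identity) a $G_{ad}$-invariant condition, and algebras on that locus form separate orbits on which $b_{0,1}$ cannot be normalized away (the required $A_1$ forces $A_0+A_1b_{2,3}=0$); this is, however, a defect of the proposition as stated --- its invariants are undefined there --- rather than of your argument for the generic stratum.
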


\newpage
\begin{pr}\emph{}
\begin{enumerate}
\item Two algebras $L(\alpha)$ and $L(\alpha')$ from $U_{6}^{2}$
are isomorphic, if and only if  \\
$ \ds {\frac { \left( b'_{{0,1}}-b'_{{2,3}}b'_{{0,0}} \right)
^{4}{{b'}^ {3}_{{1,2}}}}{{{b'}^{3}_{{2,3}}}{{b'}^{5}_{{0,1}}}}}
={\frac { \left( b_{{0,1}}-b_{{2,3}}b_{{0,0}} \right) ^{4}{b^
{3}_{{1,2}}}}{{b^{3}_{{2,3}}}{b^{5}_{{0,1}}}}}.
 $
 \item
For any $\lambda\in \mathbb{C},$ there exists $L(\alpha)\in
U_{6}^{2}:$\ $\ds {\frac { \left( b_{{0,1}}-b_{{2,3}}b_{{0,0}}
\right) ^{4}{b^
{3}_{{1,2}}}}{{b^{3}_{{2,3}}}{b^{5}_{{0,1}}}}}=\lambda.$
\end{enumerate} Therefore orbits from $U_{6}^{2}$ can be
parameterized as $L\left(0, 1, 0, \lambda, 0, 1\right),\ \lambda
\in \mathbb{C}$.

\end{pr}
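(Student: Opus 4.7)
The plan is to parallel the strategy used for Proposition 2.2: first verify that the displayed rational function is a genuine invariant by substituting the isomorphism criterion of Theorem 2.2, and then realize the canonical form $L(0,1,0,\lambda,0,1)$ by an explicit adapted base change for each $L(\alpha)\in U_6^{2}$. I begin by noting that $U_6^{2}$ is stable under isomorphism: the formulas of Theorem 2.2 specialized to $b_{1,1}=0$ give $b'_{1,1}=0$, while $b'_{0,1}$ and $b'_{2,3}$ are nonzero scalar multiples of their unprimed counterparts.

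For the necessity half of (1), I specialize Theorem 2.2 to $b_{1,1}=0$ and carry out the key simplification
\begin{equation*}
b'_{0,1}-b'_{2,3}\,b'_{0,0}=\frac{b_{0,1}-b_{0,0}\,b_{2,3}}{A_0\,(A_0+A_1 b_{2,3})^{2}}.
\end{equation*}
Feeding this together with the formulas for $b'_{1,2}$, $b'_{2,3}$, $b'_{0,1}$ into $(b'_{0,1}-b'_{2,3}b'_{0,0})^{4}(b'_{1,2})^{3}/((b'_{2,3})^{3}(b'_{0,1})^{5})$ causes every power of $A_0$, $B_1$, and $A_0+A_1b_{2,3}$ to cancel in pairs, leaving the unprimed quotient. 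For the sufficiency half, I reduce to the canonical form by choosing $A_1=-A_0b_{0,0}/b_{0,1}$ (killing $b'_{0,0}$), $A_0^{3}=b_{0,1}^{2}/(b_{0,1}-b_{0,0}b_{2,3})$ (normalizing $b'_{0,1}=1$), $B_1=A_0(b_{0,1}-b_{0,0}b_{2,3})/(b_{2,3}b_{0,1})$ (normalizing $b'_{2,3}=1$), and then solving the linear equation $b'_{1,3}=0$ in $B_3$ (its coefficient is proportional to $b_{2,3}\neq 0$). The resulting $\lambda=b'_{1,2}$ satisfies $\lambda^{3}=$ the common invariant, and the residual cube-root-of-unity ambiguity between two such canonical forms is killed by the rescaling $(A_0,A_1,B_1)=(\zeta^{-1},0,\zeta^{-1})$, which sends $L(0,1,0,\lambda,0,1)$ to $L(0,1,0,\zeta\lambda,0,1)$.

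Part (2) is then immediate: for any $\lambda\in\mathbb{C}$, picking $\mu$ with $\mu^{3}=\lambda$ yields $L(0,1,0,\mu,0,1)\in U_6^{2}$ with invariant $\lambda$. The main obstacle is the algebraic bookkeeping behind the identity for $b'_{0,1}-b'_{2,3}b'_{0,0}$ and the subsequent cancellation, together with the need to treat the sublocus $b_{0,1}=b_{0,0}b_{2,3}$ separately: on this locus the prescribed $A_1$ forces $A_0+A_1b_{2,3}=0$ and is therefore inadmissible, so one must supply an alternative reduction whose output is the representative $L(0,1,0,0,0,1)$ attached to the invariant value zero.
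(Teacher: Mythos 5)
Your overall strategy is exactly the one the paper intends (the paper omits the proof of this proposition entirely, saying the $6$-dimensional arguments are similar to the $5$-dimensional ones, i.e.\ to Proposition 2.1): verify invariance of the displayed rational function by substituting the formulas of the isomorphism criterion, then normalize to $L(0,1,0,\lambda,0,1)$ by an explicit choice of $A_0,A_1,B_1,B_3$. Your key identity $b'_{0,1}-b'_{2,3}b'_{0,0}=(b_{0,1}-b_{0,0}b_{2,3})/\bigl(A_0(A_0+A_1b_{2,3})^2\bigr)$ is correct when $b_{1,1}=0$, the subsequent cancellation of all powers of $A_0$, $B_1$ and $A_0+A_1b_{2,3}$ does occur, and the normalizations $A_1=-A_0b_{0,0}/b_{0,1}$, $A_0^3=b_{0,1}^2/(b_{0,1}-b_{0,0}b_{2,3})$, $B_1=A_0(b_{0,1}-b_{0,0}b_{2,3})/(b_{2,3}b_{0,1})$, the elimination of $b'_{1,3}$ through $B_3$ (its coefficient $-2A_0^2B_1b_{2,3}$ is nonzero), the relation $\lambda^3=I$, and the cube-root rescaling all check out.

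The genuine gap is the degenerate locus $b_{0,1}=b_{0,0}b_{2,3}$, which you correctly flag but then propose to resolve by an ``alternative reduction whose output is $L(0,1,0,0,0,1)$.'' No such reduction can exist. Your own identity shows that the vanishing of $b_{0,1}-b_{2,3}b_{0,0}$ is preserved by every adapted transformation, and $b'_{1,2}=B_1b_{1,2}/A_0^2$ shows the same for the vanishing of $b_{1,2}$; but $L(0,1,0,0,0,1)$ has $b_{0,1}-b_{2,3}b_{0,0}=1\neq 0$ and $b_{1,2}=0$. Consequently an algebra such as $L(1,1,0,1,0,1)\in U_6^2$ has invariant value $0$ yet is isomorphic neither to $L(0,1,0,0,0,1)$ nor to any algebra with $b_{0,1}\neq b_{0,0}b_{2,3}$, so the ``if'' direction of part (1) and the parameterization claim fail there. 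What your argument actually establishes is the proposition under the extra hypothesis $b_{0,1}\neq b_{2,3}b_{0,0}$ (which is automatic exactly when the invariant is nonzero). To close the proof one must either build that hypothesis into $U_6^2$ or adjoin a further invariant separating the sublocus; the obstruction lies in the statement itself rather than in a fixable omission of your write-up.
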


\begin{pr}\emph{}
\begin{enumerate}
\item Two algebras $L(\alpha)$ and $L(\alpha')$ from $U_{6}^{7}$
are isomorphic, if and only if

$\ds{\frac
{4\,b'_{{0,0}}{b'^{4}_{{1,2}}}-2\,b'_{{1,3}}b'_{{0,1}}{b'^{
2}_{{1,2}}}+{b'^{2}_{{1,3}}}b'_{{1,1}}}{b'_{{1,2}}{b'^{2}_{{1,1}}}}}
={\frac {4\,b_{{0,0}}{b^{4}_{{1,2}}}-2\,b_{{1,3}}b_{{0,1}}{b^{
2}_{{1,2}}}+{b^{2}_{{1,3}}}b_{{1,1}}}{b_{{1,2}}{b^{2}_{{1,1}}}}}
$\ and \

$\ds{\frac { \left(
b'_{{0,1}}{b'^{2}_{{1,2}}}-b'_{{1,3}}b'_{{1,1}} \right) ^{
2}}{b'_{{1,2}}{b'^{3}_{{1,1}}}}}={\frac { \left(
b_{{0,1}}{b^{2}_{{1,2}}}-b_{{1,3}}b_{{1,1}} \right) ^{
2}}{b_{{1,2}}{b^{3}_{{1,1}}}}}. $ \item For any $
\lambda_1,\lambda_2\in\mathbb{C},$ there exists $L(\alpha)\in
U_{6}^{7}: $
$${\frac {4\,b_{{0,0}}{b^{4}_{{1,2}}}-2\,b_{{1,3}}b_{{0,1}}{b^{
2}_{{1,2}}}+{b^{2}_{{1,3}}}b_{{1,1}}}{b_{{1,2}}{b^{2}_{{1,1}}}}}=\lambda_1\
,\ \ {\frac { \left( b_{{0,1}}{b^{2}_{{1,2}}}-b_{{1,3}}b_{{1,1}}
\right) ^{ 2}}{b_{{1,2}}{b^{3}_{{1,1}}}}}=\lambda_2.$$
\end{enumerate}

The orbits from $U_{6}^{7}$ can be parameterized as
$L\left(\lambda_1, \lambda_2, 1, 1, 0, 0\right),\ \lambda_1,
\lambda_2\in \mathbb{C}$.

\end{pr}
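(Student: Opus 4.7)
The plan is to follow the two-step strategy used in Propositions 2.2 and 2.3: first verify that the two expressions
\[
I_1(\alpha)=\frac{4b_{0,0}b_{1,2}^4-2b_{1,3}b_{0,1}b_{1,2}^2+b_{1,3}^2b_{1,1}}{b_{1,2}b_{1,1}^2},\qquad I_2(\alpha)=\frac{(b_{0,1}b_{1,2}^2-b_{1,3}b_{1,1})^2}{b_{1,2}b_{1,1}^3}
\]
are preserved by every adapted base change that keeps us inside $U_6^7$; then exhibit an explicit such base change that sends an arbitrary $L(\alpha)\in U_6^7$ to a canonical form with $b'_{1,1}=b'_{1,2}=1$ and $b'_{1,3}=0$, so that the orbit is completely described by the residual pair $(b'_{0,0},b'_{0,1})$.

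I would begin by specializing the transformation formulas of Theorem 2.2 to the case $b_{2,3}=0$ that defines $U_6^7$. The factor $A_0+A_1b_{2,3}$ collapses to $A_0$, the $B_2,B_3$ contributions to $b'_{1,3}$ drop out, and one is left with
\[
b'_{1,1}=\frac{B_1b_{1,1}}{A_0^4},\quad b'_{1,2}=\frac{B_1b_{1,2}}{A_0^2},\quad b'_{1,3}=\frac{B_1(2A_1b_{1,2}^2+A_0b_{1,3})}{A_0^4},
\]
together with the rules for $b'_{0,0}$ and $b'_{0,1}$. Invariance of $I_2$ now follows from the compact identity $b'_{0,1}(b'_{1,2})^2-b'_{1,3}b'_{1,1}=(B_1^2/A_0^7)(b_{0,1}b_{1,2}^2-b_{1,3}b_{1,1})$, combined with $b'_{1,2}(b'_{1,1})^3=(B_1^4/A_0^{14})b_{1,2}b_{1,1}^3$. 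For $I_1$, I would expand the transformed numerator in powers of $A_1$: the $A_1$ and $A_1^2$ contributions coming from $b'_{0,0}$, from the cross term $b'_{0,1}b'_{1,3}$, and from the square $(b'_{1,3})^2$ cancel pairwise, leaving $A_0^2(B_1^3/A_0^{12})$ times the original numerator, which matches the scaling of the denominator $b_{1,2}b_{1,1}^2$.

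For the converse and the realizability claim, I would choose $A_0^2=b_{1,1}/b_{1,2}$, $B_1=b_{1,1}/b_{1,2}^2$ and $A_1=-A_0b_{1,3}/(2b_{1,2}^2)$ (the remaining $A_i,B_j$ are free, as allowed by Proposition 1.1). These choices force $b'_{1,1}=b'_{1,2}=1$, $b'_{1,3}=0$ and $b'_{2,3}=0$, and a direct substitution into the formulas for $b'_{0,0}$ and $b'_{0,1}$ gives $b'_{0,0}=I_1(\alpha)/4$ and $(b'_{0,1})^2=I_2(\alpha)$. Thus every $L(\alpha)\in U_6^7$ is isomorphic to $L(I_1(\alpha)/4,\pm\sqrt{I_2(\alpha)},1,1,0,0)$, the sign ambiguity being absorbed by $A_0\mapsto -A_0$. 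Two algebras with the same pair of invariants therefore share a common normal form, and every pair $(\lambda_1,\lambda_2)\in\mathbb{C}^2$ is realized by the representative $L(\lambda_1,\lambda_2,1,1,0,0)$. The main obstacle is the bookkeeping in the invariance check for $I_1$: the transformed numerator is cubic in $A_1$ and the cancellation of off-diagonal monomials is not a single telescoping identity but requires tracking several simultaneous ones, so it is essentially a careful polynomial expansion rather than a structural shortcut.
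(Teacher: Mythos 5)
Your proof is correct and follows exactly the strategy the paper uses for the analogous Proposition 2.1 (and tacitly for all the dimension-6 cases): specialize the isomorphism criterion of Theorem 2.2 to $b_{2,3}=0$, verify that the two rational expressions scale identically in numerator and denominator (your factors $B_1^{2}/A_0^{7}$, $B_1^{3}/A_0^{10}$ and the choice $A_0^{2}=b_{1,1}/b_{1,2}$, $B_1=b_{1,1}/b_{1,2}^{2}$, $A_1=-A_0b_{1,3}/(2b_{1,2}^{2})$ all check out), and reduce to the normal form $L(I_1/4,\pm\sqrt{I_2},1,1,0,0)$. The only blemish is the closing remark that the transformed numerator of $I_1$ is ``cubic in $A_1$'' --- it is quadratic, as your own cancellation argument already shows --- which does not affect the proof.
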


\begin{pr}\emph{}
\begin{enumerate}
\item Two algebras $L(\alpha)$ and $L(\alpha')$ from $U_{6}^{8}$
are isomorphic, if and only if \\ $ \ds {\frac { \left(
2\,b'_{{0,0}}{b'^{2}_{{1,2}}}-b'_{{1,3}}b'_{{0,1}}
 \right) ^{3}}{{b'^{3}_{{1,2}}}{b'^{4}_{{0,1}}}}}={\frac { \left( 2\,b_{{0,0}}{b^{2}_{{1,2}}}-b_{{1,3}}b_{{0,1}}
 \right) ^{3}}{{b^{3}_{{1,2}}}{b^{4}_{{0,1}}}}}.
$

 \item For any
$\lambda\in\mathbb{C},$ there exists $L(\alpha)\in U_{6}^{8}: $
$\ds {\frac { \left(
2\,b_{{0,0}}{b^{2}_{{1,2}}}-b_{{1,3}}b_{{0,1}}
 \right) ^{3}}{{b^{3}_{{1,2}}}{b^{4}_{{0,1}}}}}=\lambda.$
\end{enumerate}

The orbits from the set $U_{6}^{8}$ can be parameterized as
$L\left(\lambda, 1,0,1 ,0 ,0\right),\ \lambda\in \mathbb{C}$.

\end{pr}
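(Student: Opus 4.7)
The plan is to follow the pattern of the preceding propositions in this section: restrict the transformation formulas of Theorem 2.2 to the constraints defining $U_6^8$ (namely $b_{2,3}=b_{1,1}=0$, $b_{0,1}\neq 0$, $b_{1,2}\neq 0$), extract the claimed invariant and a canonical form, and then verify both the isomorphism criterion and the surjectivity claim by direct substitution.

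First I would specialize Theorem 2.2. Setting $b_{1,1}=b_{2,3}=0$ forces $b_{1,1}'=b_{2,3}'=0$, eliminates the dependence of $b_{1,3}'$ on $B_2$ and $B_3$ (since those terms sit behind a factor of $b_{2,3}$), and collapses the remaining relations to
\begin{eqnarray*}
b_{0,0}' &=& \frac{A_0 b_{0,0} + A_1 b_{0,1}}{A_0^3 B_1}, \qquad b_{0,1}' = \frac{b_{0,1}}{A_0^3}, \\
b_{1,2}' &=& \frac{B_1 b_{1,2}}{A_0^2}, \qquad b_{1,3}' = \frac{B_1\bigl(2 A_1 b_{1,2}^2 + A_0 b_{1,3}\bigr)}{A_0^4}.
\end{eqnarray*}
A short calculation then gives
$$
2 b_{0,0}'(b_{1,2}')^2 - b_{1,3}' b_{0,1}' = \frac{B_1\bigl(2 b_{0,0} b_{1,2}^2 - b_{1,3} b_{0,1}\bigr)}{A_0^6},
$$
whereas $(b_{1,2}')^3(b_{0,1}')^4 = B_1^3 b_{1,2}^3 b_{0,1}^4 / A_0^{18}$, so the ratio displayed in part (1) of the statement is preserved by every admissible base change. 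This settles the ``only if'' direction.

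For the ``if'' direction and for Part 2, I would normalize an arbitrary $L(\alpha)\in U_6^8$ to the canonical form $L(\lambda,1,0,1,0,0)$ by an explicit choice of parameters. Fix a cube root $A_0$ of $b_{0,1}$, then take $A_1 = -A_0 b_{1,3}/(2 b_{1,2}^2)$ to force $b_{1,3}'=0$ and $B_1 = A_0^2/b_{1,2}$ to force $b_{1,2}'=1$; these choices simultaneously yield $b_{0,1}'=1$. A direct substitution into the formula for $b_{0,0}'$ above shows that the resulting $\lambda:=b_{0,0}'$ satisfies $8\lambda^3$ equal to the invariant in (1). Two algebras in $U_6^8$ therefore share the invariant iff they both reduce to a canonical form $L(\lambda,1,0,1,0,0)$ with the same $8\lambda^3$, and surjectivity of the invariant onto $\mathbb{C}$ is then immediate because $\lambda \mapsto 8\lambda^3$ is surjective on $\mathbb{C}$.

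The main obstacle is wrangling the formula for $b_{1,3}'$ in Theorem 2.2, easily the longest of the six; once $b_{2,3}=0$ absorbs the $B_2,B_3$-bracket, the rest is purely mechanical. A minor point worth acknowledging is the threefold ambiguity in the cube root $A_0$, which multiplies the final $\lambda$ by a cube root of unity; this does not alter the orbit but means that the canonical parameter $\lambda$ is only determined up to that ambiguity.
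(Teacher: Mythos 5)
Your proof is correct and follows essentially the same strategy the paper employs for the analogous propositions in dimension 5 (the paper omits the detailed 6-dimensional proofs): specialize the relations of Theorem 2.2 to $b_{1,1}=b_{2,3}=0$ to see that the stated quantity is an orbit invariant, then reduce to the canonical form $L(\lambda,1,0,1,0,0)$ via an explicit choice of $A_0$, $A_1$, $B_1$. Your computations check out, including the identification of the invariant of the canonical form as $8\lambda^3$ and the correct handling of the cube-root ambiguity in $A_0$.
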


\begin{pr}\emph{}
\begin{enumerate}
\item Two algebras $L(\alpha)$ and $L(\alpha')$ from $U_{6}^{11}$
are isomorphic, if and only if $\ds \left(\frac {
b'_{1,3}}{b'_{1,1}}\right)^6 \Delta'= \left(\frac {
b_{1,3}}{b_{1,1}}\right)^6 \Delta.$

 \item For any
$\lambda\in \mathbb{C},$ there exists $L(\alpha)\in U_{6}^{11}:$ $
\ds \left(\frac { b_{1,3}}{b_{1,1}}\right)^6 \Delta=\lambda.$
\end{enumerate}

The orbits from $U_{6}^{11}$ can be parameterized as
$L\left(\lambda,0 ,1 ,0 ,1 ,0\right), \lambda\in \mathbb{C}$.

\end{pr}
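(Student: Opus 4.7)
The plan is to imitate the pattern of Proposition 2.1, with the Theorem 2.2 transformation formulas specialised to the subvariety $b_{1,2}=b_{2,3}=0$ that cuts out $U_6^{11}$. First I would observe that this subvariety is stable under $G_{ad}$: since $b'_{2,3}=B_1 b_{2,3}/(A_0+A_1 b_{2,3})$ and $b'_{1,2}=B_1 b_{1,2}/A_0^2$, both vanish whenever $b_{2,3}$ and $b_{1,2}$ do. Substituting $b_{1,2}=b_{2,3}=0$ into Theorem 2.2 reduces the relevant transformation rules to
\begin{equation*}
b'_{0,0}=\frac{A_0^2 b_{0,0}+A_0A_1 b_{0,1}+A_1^2 b_{1,1}}{A_0^4 B_1},\quad
b'_{0,1}=\frac{A_0 b_{0,1}+2A_1 b_{1,1}}{A_0^4},\quad
b'_{1,1}=\frac{B_1 b_{1,1}}{A_0^4},\quad
b'_{1,3}=\frac{B_1 b_{1,3}}{A_0^3}.
\end{equation*}

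For the ``only if'' direction of part (1), I would compute $\Delta'=4b'_{0,0}b'_{1,1}-(b'_{0,1})^2$ directly from the above. The key cancellation is identical to the one already exploited in the 5-dimensional case (the $4A_0A_1 b_{0,1}b_{1,1}$ and $4A_1^2 b_{1,1}^2$ contributions cancel between the two terms), leaving $\Delta'=\Delta/A_0^6$. Pairing this with the elementary identity $b'_{1,3}/b'_{1,1}=A_0\,(b_{1,3}/b_{1,1})$ immediately gives $(b'_{1,3}/b'_{1,1})^6\Delta'=(b_{1,3}/b_{1,1})^6\Delta$, which is the claimed invariant.

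For the ``if'' direction, I would exhibit an explicit base change sending any $L(\alpha)\in U_6^{11}$ to a canonical representative. Because $b_{1,1}\neq0$ and $b_{1,3}\neq0$, the successive choices $A_1=-A_0 b_{0,1}/(2b_{1,1})$, $B_1=A_0^4/b_{1,1}$, and $A_0=b_{1,1}/b_{1,3}$ are all legal; they force $b'_{0,1}=0$, $b'_{1,1}=1$, $b'_{1,3}=1$ in turn, while a short direct substitution yields $b'_{0,0}=(b_{1,3}/b_{1,1})^6\Delta/4$. Carrying out the same reduction on $L(\alpha')$ and using the assumed equality of invariants, both algebras land at the same canonical form, hence are isomorphic. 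Part (2) is then immediate: given any $\lambda\in\mathbb{C}$, the algebra $L(\lambda/4,0,1,0,1,0)$ lies in $U_6^{11}$ and has invariant value $\lambda$, and the parameterization $L(\lambda,0,1,0,1,0)$ quoted at the end is simply the resulting cross-section of the orbit space.

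I do not expect any genuine obstacle beyond bookkeeping; the only moderately delicate step is verifying the cancellation that produces $\Delta'=\Delta/A_0^6$, which is the one place where an arithmetic slip would spoil the invariance.
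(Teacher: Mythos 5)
Your proposal is correct and follows exactly the pattern the paper uses for Proposition 2.1 (the paper omits the six-dimensional proofs, noting only that they are similar): specialise the Theorem 2.2 formulas to $b_{1,2}=b_{2,3}=0$, check that $\Delta'=\Delta/A_0^6$ and $b'_{1,3}/b'_{1,1}=A_0\,b_{1,3}/b_{1,1}$ give the invariant, then normalise with $A_1=-A_0b_{0,1}/(2b_{1,1})$, $B_1=A_0^4/b_{1,1}$, $A_0=b_{1,1}/b_{1,3}$. Your remark about the residual factor $1/4$ in $b'_{0,0}$ is a harmless reparameterization of the orbit label (the same normalisation issue is already present in the paper's $U_5^1$ case) and does not affect the classification.
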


\begin{pr}\emph{}

The subsets  $U_{6}^{3},\ U_{6}^{4},\ U_{6}^{5},\ U_{6}^{6},\
U_{6}^{9},\ U_{6}^{10},\ U_{6}^{12},\ U_{6}^{13},\ U_{6}^{14},\
U_{6}^{15},\ U_{6}^{16},\ U_{6}^{17},\ U_{6}^{18}$ and $
U_{6}^{19}$ are single orbits with representatives
$L(1,0,0,1,0,1),\ L(0,0,0,1,0,1),\
L(1,0,0,0,0,1),\ L(0,0,0,0,0,1),\ L(1,0,0,1,0,0),\\
L(0,0,0,1,0,0),\ L(1,0,1,0,0,0),\ L(0,0,1,0,0,0),\
L(0,1,0,0,1,0),\ L(0,1,0,0,0,0),\ L(1,0,0,0,1,0),\\
L(1,0,0,0,0,0),\ L(0,0,0,0,1,0)$ and $L(0,0,0,0,0,0),$
respectively.

\end{pr}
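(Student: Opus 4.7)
The plan is to mimic the proof of Proposition 2.2 from the 5-dimensional case. For each of the fourteen subsets $U_6^i$ listed, I would exhibit explicit values of the parameters $A_0, A_1, B_1, B_2, B_3$ (subject to the non-degeneracy constraint $A_0 B_1(A_0 + A_1 b_{2,3}) \neq 0$) so that the adapted base change of Proposition 1.1 carries an arbitrary $L(\alpha) \in U_6^i$ to the claimed representative. The verification then reduces to substituting these values into the six transformation formulas of Theorem 2.2 and reading off that the primed coordinates agree with the representative.

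The cases split naturally according to the vanishing of $b_{2,3}$ and $b_{1,2}$, which governs how the ``higher'' coordinate $b_{1,3}$ can be normalized. When $b_{2,3} \neq 0$ (subsets $U_6^3$ through $U_6^6$), the coefficient of $B_3$ in the numerator of $b_{1,3}'$ is $-2A_0^2 B_1 b_{2,3} \neq 0$, so after fixing $A_0, A_1, B_1$ to normalize the other nonzero coordinates, $B_3$ is solved for last to force $b_{1,3}' = 0$. When $b_{2,3} = 0$ but $b_{1,2} \neq 0$ (subsets $U_6^9, U_6^{10}$), the expression for $b_{1,3}'$ collapses to $(A_0 b_{1,3} + 2 A_1 b_{1,2}^2) B_1/A_0^5$, which is linear in $A_1$ with nonzero coefficient, so $A_1$ absorbs the role previously played by $B_3$. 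Finally, when $b_{2,3} = b_{1,2} = 0$ (subsets $U_6^{12}$ through $U_6^{19}$), the formula degenerates to $b_{1,3}' = B_1 b_{1,3}/A_0^3$, and one normalizes $b_{1,3}'$ (to $1$ or $0$) through $A_0, B_1$, which are in turn constrained by $b_{1,1}' = 1$ or $b_{0,0}' = 1$ depending on which earlier parameter is nonzero in the given subset.

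In every case the procedure follows the same four-step recipe: (i) choose $A_0$ and $B_1$ to normalize whichever of $b_{0,0}, b_{1,1}, b_{1,2}, b_{1,3}, b_{2,3}$ is the pivot of the representative to $1$; (ii) use $A_1$ to kill $b_{0,1}'$ (or, when $b_{0,1}$ is already zero and $b_{0,0}$ must vanish, kill $b_{0,0}'$), exploiting the linear dependence of these expressions on $A_1$ after step (i); (iii) take $B_2$ freely, since it appears only through $B_2^2$ inside the $b_{2,3}$-factor of $b_{1,3}'$ and is therefore absorbed at the next step; (iv) choose $B_3$ (or $A_1$, in the middle group of cases) as described above to set $b_{1,3}'$ to its prescribed value. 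The auxiliary parameters $A_2, A_3, A_4, B_4$ play no role and may be taken arbitrarily.

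The main obstacle is purely bookkeeping. The presence of the factor $A_0 + A_1 b_{2,3}$ in four of the six denominators of Theorem 2.2 forces the normalization equations for $b_{2,3}'$ and the pivot coordinate to be solved simultaneously, which produces fifth-, cube-, or square-root expressions for $A_0$ in terms of the original data; since $\mathbb{C}$ contains all such roots, this is only a notational complication. The fourteen verifications are otherwise mechanical and, as in Proposition 2.2, amount to exhibiting the explicit $(A_0, A_1, B_1)$ tuple per subset and checking the remaining primed coordinates vanish.
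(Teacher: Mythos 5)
Your proposal is correct and takes essentially the same approach as the paper, which omits the proof of this proposition as ``similar to the 5-dimensional case'' and, in the fragments it does supply (e.g.\ for $U_6^{3}$ and $U_6^{4}$), argues exactly as you do: exhibit explicit values of $A_0, A_1, B_1$ (and, where $b_{2,3}\neq 0$, of $B_3$) and substitute into the transformation formulas of Theorem 2.2 to land on the stated representative. The only quibble is a harmless exponent slip: when $b_{2,3}=0$ the formula collapses to $b_{1,3}'=B_1\left(A_0 b_{1,3}+2A_1 b_{1,2}^{2}\right)/A_0^{4}$, not $/A_0^{5}$, which does not affect the linearity-in-$A_1$ argument you rely on.
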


\section{Conclusion}
\begin{enumerate}
\item In $TLeib_5,$ we distinguished nine isomorphism classes (one
parametric family and eight concrete) of three dimensional Leibniz
algebras and shown that they exhaust all possible cases.
 \item In
the case of $TLeib_6,$ there are 19 isomorphism classes (five
parametric families and 14 concrete) and they exhaust all possible
cases.
\end{enumerate}

 \textbf{Remark.}
It should be pointed out that the filiform Lie algebras case is covered by $U_{5}^{8},\ U_{5}^{9}$ and

$U_{6}^{4},\ U_{6}^{6},\
U_{6}^{10},\ U_{6}^{18},\ U_{6}^{19}$ in five and six dimensional cases, respectively. Therefore, the list of filiform

Lie algebras in the paper agrees with the list given in \cite{GJKh}.\\

 \textbf{Acknowledgments.}
The first named author is grateful to Prof. B. A. Omirov for
helpful discussions. The research was supported by the research grant
06-01-04-SF0122 MOSTI(Malaysia). The authors also are grateful to the referees for
their critical reading of the manuscript, for valuable suggestions and comments in the original
version of this paper.

\end{document}